 \def\NZQ{\mathbb}               
 \def\NN{{\NZQ N}}
 \def\frk{\mathfrak}               
 \def\mm{{\frk m}}
 \def\opn#1#2{\def#1{\operatorname{#2}}} 
 \opn\chara{char} \opn\length{\ell} \opn\pd{pd} \opn\rk{rk}
 \opn\projdim{proj\,dim} \opn\injdim{inj\,dim} \opn\rank{rank}
 \opn\depth{depth} \opn\grade{grade} \opn\height{height}
 \opn\embdim{emb\,dim} \opn\codim{codim}
 \opn\Tr{Tr} \opn\bigrank{big\,rank}
 \opn\superheight{superheight}\opn\lcm{lcm}
 \opn\trdeg{tr\,deg}
 \opn\reg{reg} \opn\lreg{lreg} \opn\ini{in} \opn\lpd{lpd}
 \opn\size{size} \opn\sdepth{sdepth}
 \opn\link{link}\opn\fdepth{fdepth}\opn\lex{lex}
 \opn\div{div} \opn\Div{Div} \opn\cl{cl} \opn\Cl{Cl}
 \opn\Spec{Spec} \opn\Supp{Supp} \opn\supp{supp} \opn\Sing{Sing}
 \opn\Ass{Ass} \opn\Min{Min}\opn\Mon{Mon}
 \opn\Ann{Ann} \opn\Rad{Rad} \opn\Soc{Soc}
 \opn\Im{Im} \opn\Ker{Ker} \opn\Coker{Coker} \opn\Am{Am}
 \opn\Hom{Hom} \opn\Tor{Tor} \opn\Ext{Ext} \opn\End{End}
 \opn\Aut{Aut} \opn\id{id}
 \opn\nat{nat}
 \opn\pff{pf}
 \opn\Pf{Pf} \opn\GL{GL} \opn\SL{SL} \opn\mod{mod} \opn\ord{ord}
 \opn\Gin{Gin} \opn\Hilb{Hilb}\opn\sort{sort}
 \opn\aff{aff} \opn
\opn\relint{relint} \opn\st{st}
 \opn\lk{lk} \opn\cn{cn} \opn\core{core} \opn\vol{vol}  \opn\inp{inp} \opn\nilpot{nilpot}
 \opn\link{link} \opn\star{star}\opn\lex{lex}\opn\set{set}
 \opn\width{wd}
 \opn\gr{gr}
 \def\pot#1#2{#1[\kern-0.28ex[#2]\kern-0.28ex]}
 \opn\dirlim{\underrightarrow{\lim}}
 \opn\inivlim{\underleftarrow{\lim}}
 \let\to=\rightarrow
 \def\Implies{\ifmmode\Longrightarrow \else
         \unskip${}\Longrightarrow{}$\ignorespaces\fi}
 \def\implies{\ifmmode\Rightarrow \else
         \unskip${}\Rightarrow{}$\ignorespaces\fi}
 \def\iff{\ifmmode\Longleftrightarrow \else
         \unskip${}\Longleftrightarrow{}$\ignorespaces\fi}
 \newtheorem{Theorem}{Theorem}[section]
 \newtheorem{Lemma}[Theorem]{Lemma}
 \newtheorem{Corollary}[Theorem]{Corollary}
 \newtheorem{Proposition}[Theorem]{Proposition}
 \newtheorem{Remark}[Theorem]{Remark}
 \newtheorem{Example}[Theorem]{Example}
 \let\epsilon\varepsilon
 \let\kappa=\varkappa
 \def\qed{\ifhmode\textqed\fi
       \ifmmode\ifinner\quad\qedsymbol\else\dispqed\fi\fi}
 \def\textqed{\unskip\nobreak\penalty50
        \hskip2em\hbox{}\nobreak\hfil\qedsymbol
        \parfillskip=0pt \finalhyphendemerits=0}
 \def\dispqed{\rlap{\qquad\qedsymbol}}
 \opn\dis{dis}
 \def\pnt{{\raise0.5mm\hbox{\large\bf.}}}
 \opn\Lex{Lex}
\begin{document}

\title {On the Cohen-Macaulay property for quadratic tangent cones}

 \author {Dumitru I.\ Stamate}

\address{Dumitru I. Stamate, Faculty of Mathematics and Computer Science, University of Bucharest, Str. Academiei 14, Bucharest, Romania, and  \newline  \indent
Simion Stoilow Institute of Mathematics of the Romanian Academy, Research group
of the project PN-II-RU-PD-2012-3-0656, P.O.Box 1-764, Bucharest 014700, Romania}
\email{dumitru.stamate@fmi.unibuc.ro}

\dedicatory{ }

\begin{abstract}
Let $H$ be an $n$-generated numerical semigroup such that its tangent cone $\gr_\mm K[H]$ is defined by quadratic relations.
We show that  if $n<5$ then $\gr_\mm K[H]$ is Cohen-Macaulay, and for $n=5$ we explicitly describe the   semigroups $H$  such that   $\gr_\mm K[H]$ is not Cohen-Macaulay.
As an application we show that if the field $K$ is algebraically closed  and of characteristic different from two, and $n\leq 5$ then $\gr_\mm K[H]$ is Koszul 
if and only if (possibly after a change of coordinates) its defining ideal has a quadratic Gr\"obner basis. 
\end{abstract}

\thanks{}
\subjclass[2010]{Primary 13A30, 13H10,  16S37; Secondary  13D40, 16S36, 13P10}
 
\keywords{numerical semigroup ring, tangent cone, Cohen-Macaulay, Koszul, $G$-quadratic, $h$-vector}

\maketitle


\section*{Introduction}

A numerical semigroup $H$ is a subset of $\NN$ containing $0$ and which is closed under addition such that the gcd of all elements in $H$ is $1$, or
equivalently, such that $|\NN \setminus H| < \infty$. We denote  $Gen(H)$ its unique minimal generating set. 
The embedding dimension of $H$ is defined as $\embdim(H)= |Gen(H)|$ 
and the multiplicity of $H$ is $e(H)= \min Gen(H)$.

Let $K$ be an infinite field.
The additive relations among the generators of $H$ are captured by the defining ideal $I_H$ of the semigroup ring $K[H]=K[t^h: h\in H]\subset K[t]$.  
Namely, if $Gen(H)=\{a_1, \dots, a_n\}$ and we let $S=K[x_1, \dots, x_n]$, then $I_H=\Ker \phi$, 
where $\phi:S\to K[H]$ is the $K$-algebra map with $\phi(x_i)=t^{a_i}$, for $1\leq i\leq n$.

Another algebra that is associated to $H$ is its tangent cone  $$\gr_\mm K[H]=\oplus_{i\geq 0} \mm^i/\mm^{i+1},$$
which is the  associated graded ring  of $K[H]$ with respect to the maximal ideal $\mm=(t^h: h\in H)$. 
The defining ideal of $\gr_\mm K[H]$ is $I_H^*$, the ideal of initial forms in $I_H$, see \cite[\S 15.10.3]{Eis} and \cite[\S 3.4]{EH}.

It is a classical topic to study algebraic properties of $\gr_\mm K[H]$ like
being Cohen-Macaulay or complete intersection (CI for short) in terms of the arithmetic of $H$, see \cite{He-reg}, \cite{RV}, \cite{BarF}, \cite{dAMS}.

Algebras defined by quadratic relations occur naturally in algebraic geometry from varieties cut out by quadrics 
and they have been the initial framework for formulating several strong conjectures, 
e.g. what is currently known as the Eisenbud-Green-Harris conjecture introduced in \cite[Section 4]{EGH-highCastelnuovo}.

In recent work (\cite{HeS-quad}),  J.~Herzog and the author gave effective bounds for the multiplicity of a 
numerical semigroup $H$  such that $\gr_\mm K[H]$ is defined by quadrics.
The motivation for the current  paper came from the puzzling observation that all such numerical semigroups that 
we had obtained by blind computer search  have the property that $\gr_\mm K[H]$ is Cohen-Macaulay. 

Koszul algebras are an important class of quadratic algebras. 
Recall that a graded $K$-algebra $R= \oplus_{i\geq 0} R_i$ is called Koszul if $K$ has a graded $R$-linear resolution. 
$R$ is called $G$-quadratic if there exists a graded isomorphism  
$R\cong K[x_1,\dots, x_n]/I$ where $I$ has a quadratic Gr\"obner basis with respect to some term order.
It is well known that if $R$ is $G$-quadratic, then it is Koszul.
We refer to \cite{CdNR} and \cite{EH} for proofs and related results on Koszul algebras.

For brevity, we say that a numerical semigroup $H$ is quadratic, Koszul, or $G$-quadratic, if $\gr_\mm K[H]$ has the respective property.
Note that the quadratic property for $\gr_\mm K[H]$ depends on $H$ alone (see  \cite[Theorem 6.8]{RS}), however the other two might depend on the field $K$. 
It will be clear from the context, mainly in Section \ref{sec:g-quadratic}, which are our extra assumptions on $K$.

Let $H$ be a quadratic numerical semigroup.
Using a criterion obtained independently  by J.~Herzog (\cite{He-reg}) and A.~Garcia (\cite{Garcia}), 
and also our  results from \cite{HeS-quad}, we show in Proposition \ref{prop:cm-small-n} that
if $\embdim(H)<5$, then $\gr_\mm K[H]$ is Cohen-Macaulay. 
It requires a bit more work to prove in Theorem \ref{thm:5-quad-non-cm} that if $\embdim(H)=5$,
 then  $\gr_\mm K[H]$ is not Cohen-Macaulay precisely when $H$ is  generated as
\begin{eqnarray*}
 \langle 8, 4u', 4u+2u', 4u''+2u+ u', 6u+7u'+4u''-8 \rangle, \text{ or} \\
 \langle 8, 4u', 4u+2u', 4u''+2u+3u', 6u+9u'+4u''-8 \rangle,
\end{eqnarray*} with $u, u', u''$ positive integers and $u'>1$ is odd.
Extending these exemples, in Proposition \ref{prop:construction}  for any $n> 5$ we construct infinitely many $G$-quadratic numerical semigroups $H$ with
$\embdim(H)=n$ and $\gr_\mm K[H]$ not Cohen-Macaulay. 
It would be interesting to figure out if such constructions cover all the cases when $\gr_\mm K[H]$ is quadratic and not Cohen-Macaulay.

In the terminology introduced by Rossi and Valla in \cite{RossiValla}, for $H$ in the above families the ideals $I_H$ provide  first examples of $2$-isomultiple ideals such that
$\gr_\mm S/I_H$ is not Cohen-Macaulay.  Their existence was also questioned in \cite[Remark 2.3]{RossiValla}.
 
For a standard graded $K$-algebra $R=\oplus_{i\geq 0} R_i$ its $h$-polynomial $h(z)=\sum_{i\geq 0 }h_i z^i$ is the numerator of the Hilbert series 
$H_R(z)=\sum_{i\geq 0} \dim_K R_i z^i$ when we write
$H_R(z)=h(z)/(1-z)^d$ with $h(1)\neq 0$. The $h$-vector of $R$ is the vector of coefficients $(h_0, h_1, \dots)$ of the $h$-polynomial.
Also, the (Hilbert-Samuel) multiplicity of $R$ is defined as $e(R)= h(1)$. It is known that for a numerical semigroup $H$ its multiplicity equals the multiplicity
 of the tangent cone $\gr_\mm K[H]$.

By  work of Backelin, Conca and others, small values of $h_2$ imply the Koszul or the $G$-quadratic property of $R$, 
see  \cite{Backelin-thesis}, \cite{Conca-quadrics}, \cite{Conca-quadrics-codim3}, \cite{DAli} and Lemma \ref{lemma:tools}.
If $R$ is Cohen-Macaulay and the field $K$ is infinite, we can mod out by a regular sequence of linear forms and the $h$-vector and  the multiplicity  are preserved.
In case $H$ is a numerical semigroup and  $R=\gr_\mm K[H]$ is Cohen-Macaulay, we may use  $t^{e(H)}$ as a regular element.

As an application, in Section \ref{sec:g-quadratic} we show that if $\embdim(H) <5$, then $H$ is quadratic if and only if it is $G$-quadratic.
The first examples of quadratic and non-Koszul semigroups occur in embedding dimension $5$ having  multiplicity $9$, e.g. $H=\langle 9, 17, 20, 23, 25 \rangle$.

In a similar way, in \cite{Roos-Sturmfels} Roos and Sturmfels considered the Koszul property for quadratic projective monomial curves. Namely, given the relatively prime integers
$0=a_1<a_1<\dots <a_n$, let $R=K[t_1^{a_i}t_2^{a_n-a_i}:1\leq i \leq n] \subset K[t_1, t_2]$. 
According to Table 1 in \cite{Roos-Sturmfels} obtained by a computer search,  the first time when $R$ is quadratic and not Koszul is  for $n=6$, and for $n=8$ occurs 
the first example where $R$ is Koszul and  the associated toric ideal has no quadratic Gr\"obner basis.

Under the assumption that the field $K$ is algebraically closed and of characteristic $\neq 2$, we show in Theorem \ref{thm:5semi-koszul}
that if $\embdim(H)=5$, then $H$ is Koszul if and only if it is $G$-quadratic. 
The proof works on the possible $h$-vectors of $\gr_\mm K[H]$ when $H$ is quadratic, employing  a result of Eisenbud, Green, and Harris in \cite{EGH-highCastelnuovo}.
The assumptions on the field $K$ are due to   Conca's results on the $G$-quadratic property for quadratic algebras with $h_2 \leq 3$, 
see \cite{Conca-quadrics} and \cite{Conca-quadrics-codim3}. Screening the possible ideals $J=I_H^* \mod x_1$ we found only two possible
situations without a quadratic Gr\"obner basis, described in Remark \ref{rem:h3-nonflag}. However, experimentally we found no quadratic semigroup $H$ producing such ideals.

We summarize our findings in Table \ref{table:multiplicities} in Section \ref{sec:g-quadratic} 
where we give a maximal list of $12$ possible $h$-vectors of quadratic $5$-generated numerical semigroups. 
Note that experimentally we could not obtain the $h$-vector $(1,4,5)$. Nevertheless, we can conclude that if $H$ is quadratic and $\embdim(H) \leq 5$, the
Hilbert function of $\gr_\mm K[H]$ is non-decreasing. This   topic  has   been recently considered by D'Anna, Di Marca and Micale in \cite{dAdMM} and by
Oneto, Strazzanti and Tamone in \cite{OST}.


\section{The Cohen-Macaulay condition}

In this section we study the Cohen-Macaulay property for the tangent cone of a quadratic numerical semigroup.

For further reference we first recall from our joint work with J.~Herzog \cite{HeS-quad}   some restrictions that we found on the multiplicity  of a quadratic numerical semigroup. 

\begin{Theorem}
\label{thm:bounds}
(\cite[1.1, 1.9, 1.12]{HeS-quad})
Let $H$ be a quadratic numerical semigroup minimally generated by $n>1$ elements and $K[H]$ its semigroup ring. Then
\begin{enumerate}
\item [(i)] $n \leq e(H) \leq 2^{n-1}$;
\item [(ii)] $e(H)=n$ \iff $I_H^*$ has a linear resolution;
\item [(iii)] $e(H)=2^{n-1}$ \iff $I_H^*$ is a CI ideal \iff $I_H$ is a CI ideal;
\item [(iv)] if $\gr_\mm K[H]$ is Cohen-Macaulay and $e(H) <2^{n-1}$, then $e(H) \leq 2^{n-1}-2^{n-3}$.
\end{enumerate}
Moreover, if we are in any of the situations from $(ii)$, $(iii)$ or  if $\gr_\mm K[H]$ is Cohen-Macaulay and  $e(H) = 2^{n-1}-2^{n-3}$ then $H$ is $G$-quadratic, hence Koszul.
\end{Theorem}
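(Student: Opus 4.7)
The plan is to analyze everything through the Ap\'ery set $\text{Ap}(H,e(H))$ and, in the Cohen--Macaulay cases, through the Artinian reduction $A=\gr_\mm K[H]/(x_1)$, which is a standard graded quadratic $K$-algebra with embedding dimension $n-1$ and vector space dimension $e(H)$.

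For (i), the lower bound $n\leq e(H)$ is classical: the minimal generators of $H$ lie in distinct residue classes modulo $a_1=e(H)$. For the upper bound, every $w\in\text{Ap}(H,a_1)$ has an expression $w=c_2a_2+\cdots+c_na_n$ with $c_i\geq 0$ and no occurrence of $a_1$, since otherwise $w-a_1\in H$ would contradict the Ap\'ery condition. I would argue that the quadratic hypothesis on $I_H^*$ lets one reduce to $c_i\in\{0,1\}$ by repeatedly applying quadratic rewriting rules coming from generators of $I_H^*$, none of which can reintroduce $a_1$ on the Ap\'ery side. Hence the map $S\mapsto\sum_{i\in S}a_i$ from subsets of $\{2,\dots,n\}$ covers $\text{Ap}(H,a_1)$, giving $e(H)=|\text{Ap}(H,a_1)|\leq 2^{n-1}$. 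Part (ii) then follows from the general bound $e(R)\geq \codim(R)+1$ for standard graded algebras $R$, with equality characterizing linear resolutions (essentially Herzog--K\"uhl applied to the codimension $n-1$ algebra $\gr_\mm K[H]$).

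For (iii), a CI of $n-1$ quadrics has Hilbert series $(1+z)^{n-1}/(1-z)$ and multiplicity $2^{n-1}$; conversely, reaching the bound forces the squarefree Ap\'ery map of (i) to be a bijection, which together with degree-two dimension counting exhibits $I_H^*$ as a CI. The equivalence $I_H^*$ CI $\iff$ $I_H$ CI uses matching heights $n-1$ and the inequality $\mu(I_H)\leq\mu(I_H^*)$ obtained by passing to initial forms, with the reverse direction coming from the standard flat deformation of $K[H]$ to its tangent cone, which preserves the minimal number of generators in the CI case.

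For (iv), assuming Cohen--Macaulayness and $e(H)<2^{n-1}$, the Artinian reduction $A$ is a standard graded quadratic algebra which is not a CI; its defining ideal must carry an extra quadratic generator beyond a maximal regular sequence of quadrics. Tracking how such a relation $x_ix_j=\alpha x_kx_\ell$ identifies two distinct squarefree Ap\'ery representatives, and then propagating via products with the remaining generators, shows the multiplicity drops by at least $2^{n-3}$, yielding the stated bound. I expect the main obstacle to lie here: establishing uniformly that the extra relation collapses at least $2^{n-3}$ Ap\'ery elements is essentially a Macaulay-type constraint on Hilbert functions of quadratic Artinian algebras and requires careful bookkeeping. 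The moreover clause is then handled case by case: a linear resolution is $G$-quadratic after a generic change of coordinates; a CI of quadrics is tautologically $G$-quadratic; and in the boundary case $e(H)=2^{n-1}-2^{n-3}$ one orients the unique extra relation into a leading-monomial reduction under a suitable term order.
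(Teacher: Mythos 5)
Note first that the paper does not actually prove this statement: it is recalled verbatim from \cite{HeS-quad} (items 1.1, 1.9, 1.12 there), so there is no internal proof to compare against. Your overall strategy --- the Ap\'ery set of $e(H)$, squarefree representations forced by $2a_i\in\langle \mathrm{Gen}(H)\setminus\{a_i\}\rangle$, Artinian reduction, and Hilbert-function constraints --- is the natural one and is in the spirit of the cited source, but several steps as written have genuine gaps.

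Concretely: (a) In (i), your rewriting procedure replacing $2a_i$ by an expression in the other generators has no termination argument; a substitution $2a_i\to a_j+a_k$ preserves the number of summands and can a priori cycle, so you need either a suitable potential function or, cleaner, to work with initial forms: modulo $x_1$ the ideal $I_H^*$ contains, for every $i\ge 2$, an element with leading term $x_i^2$, so $\gr_\mm K[H]/(x_1)$ is spanned by squarefree monomials and $e(H)\le \dim_K \gr_\mm K[H]/(x_1)\le 2^{n-1}$. (b) In (iii), the hard implication is $I_H$ CI $\Rightarrow I_H^*$ CI, and your justification is false as stated: the flat degeneration to the tangent cone does not preserve the minimal number of generators (in general $\mu(I^*)\ge\mu(I)$, and strict inequality occurs even for complete intersections --- this is precisely the subject of \cite{RV}). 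This direction genuinely uses the quadratic hypothesis, via Delorme's gluing description of CI numerical semigroups and the fact that quadraticity forces the gluings to be quadratic gluings, so that the initial forms of the $n-1$ binomial generators still form a regular sequence. (c) In (iv), the collapse count is wrong: a single extra binomial relation $a_i+a_j=a_k+a_\ell$ among four generators distinct from $a_1$ identifies only the $2^{n-5}$ pairs of squarefree representatives obtained by adjoining subsets of the remaining $n-5$ generators, far short of the required drop of $2^{n-3}$. What is actually needed is the Eisenbud--Green--Harris bound \eqref{eq:growth} for quadratic Artinian algebras whose defining ideal has an initial ideal containing all squares of the variables: if the algebra is not a CI then $h_2\le\binom{n-1}{2}-1=\binom{n-2}{2}+\binom{n-3}{1}$, which propagates to $h_m\le\binom{n-2}{m}+\binom{n-3}{m-1}$, and summing gives exactly $2^{n-1}-2^{n-3}$. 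You name this tool but defer precisely the step where it is indispensable. Finally, the ``moreover'' clause --- in particular $G$-quadraticity in the boundary case $e(H)=2^{n-1}-2^{n-3}$ --- is asserted rather than proved.
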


\begin{Remark}{\em
\label{rem:minimal}
With notation as above, if $e(H)=n$, then   $\gr_\mm K[H]$ has minimal multiplicity and by Sally's \cite[Theorem 2]{Sally-cm} we get that $\gr_\mm K[H]$ is Cohen-Macaulay.
We refer to the proof of Proposition 1.3 in \cite{HeS-quad} for related properties.
}
\end{Remark}

The following  arithmetic result appeared in \cite{HeS-quad}.

\begin{Lemma}(\cite[Lemma 1.6]{HeS-quad}) 
\label{lemma:quad-semi}
Let $H$ be a numerical semigroup minimally generated by $a_1<a_2<\dots <a_n$ with  $n>1$.
If $H$ is quadratic, then 
\begin{enumerate}
\item[(i)] there exist $ k, \ell \geq 2$ such that $a_1| a_k +a_\ell$.
\item [(ii)] $2 a_i \in \langle a_1, \dots a_{i-1}, a_{i+1}, \dots a_n\rangle$, for all $2\leq i \leq n$.
\end{enumerate}
\end{Lemma}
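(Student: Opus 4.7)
The plan is to reduce both (i) and (ii) to the single observation that for each $i\geq 2$, the monomial $x_i^{a_1}$ lies in $I_H^*$. Indeed, the identity $a_1\cdot a_i = a_i\cdot a_1$ yields the binomial $x_i^{a_1}-x_1^{a_i}\in I_H$, and since $a_1<a_i$, its lowest-degree component with respect to the standard grading on $S$ is $x_i^{a_1}$.

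For (ii), I would fix $i\in\{2,\dots,n\}$ and let $d$ be the smallest positive integer with $x_i^d\in I_H^*$. Then $d\leq a_1$ by the above observation, and $d\geq 2$ since $a_i$ being a minimal generator prevents $x_i\in I_H^*$. I plan to show $d=2$. If so, then $x_i^2$ is a monomial in $I_H^*$, hence the initial form of a binomial $x_i^2-x^\gamma\in I_H$ with $|\gamma|>2$; the minimality of $a_i$ then forces $\gamma_i=0$ (otherwise one could cancel an $a_i$ from both sides and produce a non-trivial representation of $a_i$), yielding $2a_i=\sum_{j\neq i}\gamma_j a_j$, which is (ii). To establish $d=2$, suppose $d\geq 3$ for contradiction. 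Since $I_H^*$ is generated in degree 2, write $x_i^d=\sum_k h_k q_k$ with $q_k\in(I_H^*)_2$ and $\deg h_k=d-2$; each $q_k$ is either a quadratic monomial $x_px_q\in I_H^*$ or a homogeneous quadratic binomial $x_px_q-x_{p'}x_{q'}\in I_H$ (with $a_p+a_q=a_{p'}+a_{q'}$). Working in $R=S/I_H^*$, I trace the vanishing $x_i^d=0$ through a chain of quadratic substitutions preserving the $\phi$-value, and a careful accounting of the $x_i$-exponent shows that the chain must apply either a monomial relation that sends $x_i^2$ directly to $0$, or a binomial relation that sends $x_i^2$ to some $x_px_q$ with $p,q\neq i$ (note that $\{p,q\}=\{i,j\}$ with $j\neq i$ is impossible, since $2a_i=a_i+a_j$ would force $a_j=a_i$). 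Either outcome contradicts the minimality of $d\geq 3$.

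For (i), I would apply the analogous reduction to $x_2^{a_1}\in I_H^*$, now tracking $\phi$-values modulo $a_1$. Every intermediate monomial $x^\mu$ satisfies $\phi(x^\mu)=a_1 a_2\equiv 0\pmod{a_1}$. The reduction terminates at a monomial of the form $x^\nu\cdot x_px_q$ for some $x_px_q\in I_H^*$; the plan is to argue that the reduction path can be chosen so that $p,q\geq 2$. Given that, the equation $a_p+a_q+\phi(x^\nu)=a_1a_2$ combined with the mod-$a_1$ analysis of $\phi(x^\nu)$ through the Ap\'ery structure forces $a_1\mid a_p+a_q$, proving (i).

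The main obstacle is making rigorous the combinatorial rewriting in both parts---in (ii), ensuring the chain of substitutions actually ``bites'' on an $x_i^2$ factor rather than shuffling it indefinitely through binomial relations; in (i), guaranteeing the existence of a reduction path that avoids ending at $x_1x_m$-type quadratic monomial relations. I expect both issues to be handled either via a standard-pair decomposition of the binomial ideal $I_H^*$, or by a minimal-counterexample argument on the multi-degree of the intermediate monomials in the reduction.
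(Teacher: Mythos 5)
This lemma is quoted from \cite[Lemma 1.6]{HeS-quad} and the present paper contains no proof of it, so I can only measure your proposal against the statement itself. For part (ii) your central mechanism is sound: $x_i^2$ is the only quadratic divisor of $x_i^{a_1}$, so the first degree-two generator of $I_H^*$ that can act on $x_i^{a_1}$ must involve $x_i^2$, and either branch of your dichotomy hands you a representation of $2a_i$ avoiding $a_i$. But the packaging is broken. The intermediate claim $d=2$, with $d=\min\{k: x_i^k\in I_H^*\}$, is false: for the quadratic semigroup $H=\langle 4,5,6,7\rangle$ and $i=2$ one has $2a_2=10=4+6=5+5$ of order two, so $x_2^2\notin I_H^*$ (only the binomial $x_2^2-x_1x_3$ lies there), while $x_2^4\in I_H^*$ because $20=5\cdot 4$ has order five; hence $d=4$. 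Consequently your second alternative --- a binomial $x_i^2-x_px_q\in I_H^*$ with $p,q\neq i$ --- does \emph{not} contradict the minimality of $d$, since such a binomial does not place the monomial $x_i^2$ in $I_H^*$. What you should extract from that branch is not a contradiction but the conclusion itself, namely $2a_i=a_p+a_q$ with $p,q\neq i$. With that repair, and with the reduction made precise (a monomial lies in a $\phi$-multigraded ideal generated by quadratic monomials and binomials if and only if its connected component under the binomial exchanges contains a multiple of a monomial generator; this also disposes of your ``indefinite shuffling'' worry, because the first exchange out of $x_i^{a_1}$ has no choice but to act on the factor $x_i^2$), part (ii) goes through.

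Part (i) has a genuine gap. First, the reduction of $x_2^{a_1}$ cannot in general be routed away from $x_1$: in $\langle 4,5,6,7\rangle$ the only quadric of $I_H^*$ whose support meets the divisors of $x_2^4$ is $x_2^2-x_1x_3$, so every path out of $x_2^4$ immediately acquires the variable $x_1$. Second, and more seriously, even if the reduction terminates at $x^{\nu}x_px_q$ with $x_px_q$ a monomial generator and $p,q\geq 2$, the identity $\phi(\nu)+a_p+a_q=a_1a_2\equiv 0\pmod{a_1}$ yields $a_1\mid a_p+a_q$ only when $a_1\mid\phi(\nu)$, and nothing in your setup forces this; the appeal to ``the Ap\'ery structure'' is not an argument, and $x^{\nu}$ need not be a power of $x_1$ once $n\geq 5$. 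The witness $x_2^{a_1}$ simply carries no minimality that controls $\phi(\nu)\bmod a_1$. A more promising start is to take $m\geq 2$ minimal such that $a_{k_1}+\cdots+a_{k_m}\equiv 0\pmod{a_1}$ for some $k_1,\dots,k_m\geq 2$: then the monomial $x_{k_1}\cdots x_{k_m}$ lies in $I_H^*$ (its $\phi$-value exceeds $ma_1$), and the minimality of $m$ keeps its entire connected component free of $x_1$; but one must still rule out $m\geq 3$ by analysing the terminal monomial generator, and your proposal contains no step that does this. As written, (i) is not proved.
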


For the numerical semigroup  $H$  minimally generated by $a_1<\dots <a_n$ we denote 
$$c_i= \min \{k>0: ka_i \in \langle Gen(H)\setminus\{a_i \}\rangle \}, \text{ for } i=1, \dots, n.
$$
With this notation one has the following characterization proved independently by Herzog (\cite{He-reg}) and Garcia (\cite{Garcia}).

\begin{Proposition}(Herzog \cite[pp.189-190]{He-reg}, Garcia \cite[Theorem 24]{Garcia})
\label{prop:cm-gr}
The tangent cone $\gr_\mm K[H]$ is  Cohen-Macaulay  if and only if for all  integers  $0\leq \nu_i < c_i$ and $2\leq i \leq n$ such that
$$
\sum_{i=2}^n \nu_i a_i \in a_1+H,
$$
there exist integers $\mu_1>0, \mu_2 \geq 0, \dots, \mu_n\geq 0$ such that
$$
\sum_{i=2}^n \nu_i a_i= \sum_{i=1}^n \mu_i a_i \text{\quad and \quad} \sum_{i=2}^n \nu_i \leq \sum_{i=1}^n \mu_i.
$$
\end{Proposition}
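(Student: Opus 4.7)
The plan is to first reduce the Cohen--Macaulay property of $\gr_\mm K[H]$ to a nonzerodivisor condition on the initial form of $t^{a_1}$, and then translate this into the combinatorial statement.

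Since $K[H]$ is a one-dimensional Cohen--Macaulay local domain and $t^{a_1}\in\mm\setminus\mm^2$ is a nonzerodivisor, a standard criterion for associated graded rings of one-dimensional Cohen--Macaulay local rings gives that $\gr_\mm K[H]\cong S/I_H^*$ is Cohen--Macaulay if and only if the initial form $x_1^*$ is a nonzerodivisor in $S/I_H^*$; equivalently, $(I_H^*\colon x_1)=I_H^*$. I will also need the concrete monomial description of $I_H^*$: since $I_H$ is generated by binomials $x^\nu-x^\mu$ with $\sum\nu_ia_i=\sum\mu_ia_i$, a monomial $x^\nu$ of standard degree $d$ belongs to $I_H^*$ precisely when $d<\ord(\sum\nu_ia_i)$.

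Next, I would exploit the $H$-grading of $S$ (where $\deg_H x_i=a_i$), under which both $I_H$ and $I_H^*$ are homogeneous. A hypothetical element $f\in(I_H^*\colon x_1)\setminus I_H^*$ can be taken bi-homogeneous of bidegree $(d,h_0)$, so $f=\sum c_\nu x^\nu$ with $\sum\nu_i=d$ and $\sum\nu_ia_i=h_0$. The binomials $x^\nu-x^{\nu'}$ among these monomials all lie in $I_H^*$, so modulo $I_H^*$ the element $f$ collapses to a scalar multiple of a single monomial. Cancelling any powers of $x_1$ (since $x_1$ is, at the $K[H]$ level, a nonzerodivisor) reduces us to a witness $m=x_2^{\nu_2}\cdots x_n^{\nu_n}\notin I_H^*$ with $x_1m\in I_H^*$. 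Unravelling via the monomial description of $I_H^*$, this is equivalent to $\sum\nu_ia_i\in a_1+H$ together with the failure of every factorization $\sum\nu_ia_i=\sum\mu_ia_i$ with $\mu_1>0$ to satisfy $\sum\mu_i\ge\sum\nu_i$. Hence CM-ness is exactly the combinatorial statement in the proposition, provided one still needs to restrict to $\nu_i<c_i$.

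To get the bound $0\le\nu_i<c_i$, suppose some $\nu_j\ge c_j$ and use the minimal relation $c_ja_j=\sum_{i\ne j}\gamma_ia_i$ to substitute $c_j$ copies of $a_j$ by the $\gamma$-combination. Either this substitution already introduces $a_1$, in which case a length comparison furnishes a factorization of $\sum\nu_ia_i$ using $a_1$ whose total length can be read off directly, or it only reshuffles among $\{a_i:i\ne j,\;i\ge 2\}$, producing a strictly smaller tuple $\nu'$ still satisfying $\sum\nu'_ia_i\in a_1+H$, to which one applies the hypothesis and lifts the resulting $\mu$ back through the substitution.

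The main obstacle is the length bookkeeping in this last reduction: the substitution $c_ja_j\leftrightarrow\sum_{i\ne j}\gamma_ia_i$ may shorten, preserve, or lengthen the total length according to the sign of $\sum\gamma_i-c_j$, so a careful case analysis is needed to verify that the inequality $\sum\mu_i\ge\sum\nu_i$ survives each step of the reduction. The first two paragraphs above use only standard facts about associated graded rings and toric ideals; the combinatorial reduction to the finite testing range $0\le\nu_i<c_i$ is the substantive content of the Herzog--Garcia criterion.
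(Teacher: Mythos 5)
The paper does not actually prove this proposition --- it is quoted from Herzog and Garcia --- so there is no internal argument to compare with; you are reconstructing a classical proof. Your first two paragraphs follow the standard route and are essentially sound: since $t^{a_1}$ generates a minimal reduction of $\mm$, Cohen--Macaulayness of $\gr_\mm K[H]$ is equivalent to $x_1^*$ being a nonzerodivisor, and the bigraded monomial structure of $I_H^*$ translates this into the condition $\ord(w)=\ord(w-a_1)+1$ for every $w\in a_1+H$, where $\ord(h)$ denotes the maximal length of a factorization of $h$ in the generators. One repair is needed: you cannot literally ``cancel powers of $x_1$'' from a witness monomial (removing an $x_1$ from a monomial in $(I_H^*:x_1)\setminus I_H^*$ need not produce another such monomial); the correct move is to take a maximal-length factorization $\nu$ of the $H$-degree $w$ of the witness, which automatically has $\nu_1=0$ because otherwise $\nu-e_1$ would factor $w-a_1$ with length $\geq \ord(w)-1$.

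The genuine gap is the step you defer to ``a careful case analysis'': the reduction to the finite range $0\le\nu_i<c_i$. Your substitution $c_ja_j\mapsto\sum_{i\ne j}\gamma_ia_i$ fails exactly when $|\gamma|<c_j$, since the new factorization is then strictly shorter and the inequality $\sum\mu_i\ge\sum\nu_i$ cannot be lifted back through the substitution. This is not a bookkeeping issue that more care will resolve, because the statement with the strict bound is false as written. Take $H=\langle 10,11,23\rangle$: here $c_2=3$ (as $3\cdot 11=10+23$) and $c_3=4$. The only tuples $(\nu_2,\nu_3)$ with $\nu_2<3$, $\nu_3<4$ and $11\nu_2+23\nu_3\in 10+H$ are $(1,3)$ and $(2,3)$, giving $80=8\cdot 10$ and $91=8\cdot 10+11$, and both satisfy the required conclusion with room to spare; yet $\gr_\mm K[H]$ is not Cohen--Macaulay, since $x_2^3-x_1x_3\in I_H$ has initial form $x_1x_3\in I_H^*$ while $x_3\notin I_H^*$, so $x_1^*$ is a zerodivisor (and $x_1$ is a parameter). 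The failure is detected only by $w=33$ with $\nu_2=3=c_2$, which the strict bound excludes. So the testing range must include $\nu_i=c_i$ (the inequality should presumably read $\nu_i\le c_i$), and any correct proof of the finite reduction has to be organized around that; as it stands, your final step is attempting to prove a statement that a three-generated semigroup already refutes.
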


If $H$ is quadratic, by Lemma \ref{lemma:quad-semi}(ii) we have $c_i=2$ for $i=2, \dots, n$.
This observation, together with Proposition \ref{prop:cm-gr}, gives the next result.

\begin{Proposition}
\label{prop:cm-small-n}
If $H$ is any quadratic semigroup with $\embdim(H) <5$ then $\gr_\mm K[H]$ is Cohen-Macaulay. 
\end{Proposition}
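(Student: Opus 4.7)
\textbf{Plan for Proposition \ref{prop:cm-small-n}.} Write the minimal generators as $a_1<a_2<\dots<a_n$ and note that the quadratic hypothesis combined with Lemma \ref{lemma:quad-semi}(ii) forces $c_i=2$ for every $i\ge 2$. The strategy is to invoke the Herzog-Garcia criterion (Proposition \ref{prop:cm-gr}) and verify its arithmetic condition over all tuples $(\nu_2,\dots,\nu_n)\in\{0,1\}^{n-1}$. Since $n\le 4$, the weight $w=\sum_{i\ge 2}\nu_i$ is at most $3$, so only four values of $w$ need to be addressed, and the whole argument reduces to checking how $a_1$ can appear in the relations encoded by the weight-$w$ sums.

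The low weights $w\in\{0,1\}$ are vacuous: $0\notin a_1+H$, and $a_k\in a_1+H$ for some $k\ge 2$ would make $a_k-a_1\in H$, contradicting the minimality of $a_k$. For $w=2$, write $a_k+a_\ell=a_1+h$ with $k\ne\ell$ in $\{2,\dots,n\}$. Since $a_k,a_\ell>a_1$, we get $h>0$, so any expression $h=\sum_j\lambda_j a_j$ in $H$ has $\sum_j\lambda_j\ge 1$; setting $\mu_1=1$ and $\mu_j=\lambda_j$ for $j\ge 2$ yields $\sum\mu_i\ge 2=w$, as required. This step is uniform in $n$ and therefore also disposes of the case $n=3$.

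The weight-$3$ case only arises when $n=4$, and this is the one spot where $n\le 4$ is used essentially. Given $a_2+a_3+a_4=a_1+h$ with $h\in H$, I would argue that $h$ is neither $0$ nor a single minimal generator $a_k$, so that any decomposition $h=\sum_j\lambda_j a_j$ satisfies $\sum_j\lambda_j\ge 2$, delivering $\sum\mu_i\ge 3$ by the same assignment $\mu_1=1$, $\mu_j=\lambda_j$. Each forbidden value is ruled out by the strict inequality $a_j>a_1$ for $j\ge 2$: if $h\in\{0,a_1\}$ then $a_2+a_3+a_4\le 2a_1$, impossible; and if $h=a_k$ for some $k\in\{2,3,4\}$ then $a_i+a_j=a_1$ for the two remaining indices, again impossible.

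The main (and only) obstacle is this weight-$3$ rigidity, and it is still easy in the allowed range. What makes the argument tight is that for $n=5$ the analogous weight-$4$ configuration $a_2+a_3+a_4+a_5=a_1+h$ no longer excludes $h$ from being a single minimal generator, and precisely this slack produces the non-Cohen-Macaulay examples recorded in Theorem \ref{thm:5-quad-non-cm}.
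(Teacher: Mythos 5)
Your argument is correct and rests on the same two ingredients as the paper's proof: the Herzog--Garcia criterion (Proposition \ref{prop:cm-gr}) and the fact that $c_i=2$ for $i\geq 2$ coming from Lemma \ref{lemma:quad-semi}(ii). The difference is organizational. You run the criterion uniformly for all $n\leq 4$, stratified by the weight $w=\sum_{i\geq 2}\nu_i$, and produce the witness representation directly by decomposing $h=\sum_{i\geq 2}\nu_ia_i-a_1\in H$ into generators; the paper instead dispatches $n=2$ by the explicit isomorphism $\gr_\mm K[H]\cong K[x_1,x_2]/(x_2^2)$, handles $n=3$ via the multiplicity bounds of Theorem \ref{thm:bounds}(i)--(iii), and only invokes the criterion for $n=4$, where it argues by cases on which of the $\nu_i$ and $\mu_i$ vanish. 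Your version is more self-contained and isolates exactly where $n\leq 4$ enters (the weight-$3$ rigidity), at the price of reproving easy cases the paper gets for free from its earlier results. Two small points: in the weight-$2$ step you should take $\mu_1=1+\lambda_1$ rather than $\mu_1=1$, in case the chosen decomposition of $h$ already uses $a_1$ (the count $\sum_i\mu_i=1+\sum_j\lambda_j\geq 2$ is unaffected); and your closing remark slightly misidentifies the obstruction at $n=5$ --- there the failure already occurs in a weight-$3$ configuration, namely $a_2+a_3+a_4=a_1+a_5$ with $h=a_5$ equal to the one minimal generator absent from the left-hand side, while the full weight-$4$ sum $a_2+\cdots+a_5=\mu_1a_1$ still forces $\mu_1>4$ and causes no trouble.
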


\begin{proof}
If  {$\embdim(H)=2$} then $H=\langle 2, \ell \rangle$ with $\ell >1$ odd. Hence $\gr_\mm K[H] \cong K[x_1,x_2]/(x_2^2)$ which is Cohen-Macaulay.

If  {$\embdim(H)=3$},  
by Theorem \ref{thm:bounds}(i) we have $3\leq e(H) \leq 4$, and by (ii) and (iii) in loc.cit.  $\gr_\mm K[H]$ is Cohen-Macaulay.

If  {$\embdim(H)=4$} let 
$$
\nu_2 a_2 +\nu_3 a_3+ \nu_4 a_4=\mu_1 a_1+ \mu_2 a_2 +\mu_3 a_3+ \mu_4 a_4
$$  
for some $\mu_1 >0, \mu_2, \mu_3 , \mu_4 \geq 0$ and $\nu_2, \nu_3, \nu_4 \in\{0,1\}$.
It is enough to consider the case when not both of $\nu_2$ and $\mu_2$,  of $\nu_3$ and $\mu_3$, and of $\nu_4$ and $\mu_4$ are positive.
Note that since $\embdim(H)=4$ at least two of the $\nu_i$'s need to be positive.

If $\nu_2=\nu_3=1$ and $\nu_4=0$ then in the equation $a_2+a_3=\mu_1 a_1+\mu_4 a_4$ we have either $\mu_4>0$, 
hence $\mu_1+\mu_4 \geq 2=\nu_2+\nu_3$, or $\mu_4=0$, hence $a_2+a_3=\mu_1 a_1$ with $\mu_1 >2$.
The cases $\nu_3=\nu_4=1, \nu_2=0$ and $\nu_2=\nu_4=1, \nu_3=0$ are treated similarly.

If $\nu_2=\nu_3=\nu_4=1$ then in the equation $a_2+a_3+a_4=\mu_1 a_1$ we have  $\mu_1>3$. 

By Proposition \ref{prop:cm-gr} it follows that $\gr_\mm K[H]$ is Cohen-Macaulay.
\end{proof}

\begin{Example}
\label{ex:quad-non-cm}
{\em
The statement of Proposition \ref{prop:cm-small-n} is no longer true when $\embdim(H)$ is at least $5$. We can check with Singular (\cite{Sing}) that for 
$H= \langle 8,12,13,18,35 \rangle$ the ideal $I_H^*$ has a quadratic Gr\"obner basis with respect to revlex, however $\gr_\mm K[H]$ is not Cohen-Macaulay.
 
Indeed, the toric ideal $I_H$ is minimally generated by
\begin{multline*}
I_H= (x_3^2-x_1 x_4, \ x_2 x_4^2-x_3 x_5,\ x_2 x_3 x_4-x_1 x_5, \ x_2^3-x_4^2,  \\ 
	x_1^3-x_2^2, \ x_1^2 x_3 x_4-x_2 x_5,\  x_1^2 x_4^3-x_5^2, \ x_1^2 x_2^2 x_3-x_4 x_5).
\end{multline*}

A revlex Gr\"obner basis for $I_H^*$ is given by
$$
I_H^*=(x_5^2, x_4 x_5, x_3 x_5, x_2 x_5, x_1x_5, x_4^2, x_3^2-x_1x_4,x_2^2),
$$
and $(I_H^*:x_5)= (x_1, x_2, x_3, x_4, x_5)$, hence $\depth_\mm K[H]=0$.
}
\end{Example}

This is not an isolated example. For any given embedding dimension $n>4$ we   construct infinitely many $G$-quadratic  
numerical semigroups whose tangent cone is not Cohen-Macaulay.  But first we recall a useful construction.

Let $L$ be a numerical semigroup,  $\ell$ an odd integer in $L$ and $H=\langle 2L, \ell \rangle$. By \cite[Definition 2.2]{HeS-quad}, the semigroup  $H$ is called a
quadratic gluing of $L$. It is proved in \cite[Corollary 2.7]{HeS-quad} that $L$ and $H$ are quadratic, Koszul, respectively G-quadratic, at the same time. 
It is also known by Delorme's work \cite{Delorme}  that if $L$ is a complete intersection (CI), then so is $H$.
We refer to Section 2 in \cite{HeS-quad} for more details about the CI property for quadratic numerical semigroups.

\begin{Proposition}
\label{prop:construction}
Given $n \geq 3$ and the positive integers $u_i$, $i=1, \dots, n+1$, where $u_1\geq 3 $ is odd, let 
\begin{eqnarray*}
a_1 &=& 2^{n}, \\
a_2 &=& 2^{n-1} u_1, \\
a_3 &=& 2^{n-1} u_2+ 2^{n-2} u_1,\\
\dots \\
a_{n+1} &=& 2^{n-1} u_n+ 2^{n-2} u_{n-1}+\dots + u_1,\\
a_{n+2} &=& a_2+\dots+ a_{n+1} -a_1.
\end{eqnarray*}
The semigroup  $H=\langle a_1, \dots, a_{n+2} \rangle$ is a G-quadratic  numerical semigroup of embedding dimension $n+2$ and $\gr_\mm K[H]$ is not Cohen-Macaulay. 
\end{Proposition}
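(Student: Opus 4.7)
The proposition has two independent conclusions: $H$ is a $G$-quadratic numerical semigroup of embedding dimension $n+2$, and $\gr_\mm K[H]$ is not Cohen-Macaulay. I will address them in this order.

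First, I plan to verify that $a_1, \ldots, a_{n+2}$ is a minimal generating set. Using $u_1 \geq 3$ odd and the explicit form of the $a_j$'s, one computes $v_2(a_j) = n - j + 1$ for $1 \leq j \leq n+1$ and $v_2(a_{n+2}) = 0$. Together with the size estimate $a_j > a_1 = 2^n$ for $j \geq 2$ (a routine consequence of $u_1 \geq 3$), these 2-adic valuations block any non-trivial $\NN$-representation of $a_j$ in terms of the other generators for $1 \leq j \leq n+1$. The delicate point is $a_{n+2} \notin L := \langle a_1, \ldots, a_{n+1} \rangle$: a hypothetical equation $a_{n+2} = \sum_{i=1}^{n+1} c_i a_i$ with $c_i \geq 0$ forces $c_{n+1}$ odd modulo $2$, after which a 2-adic descent using $v_2(a_i) = n - i + 1$ iteratively constrains $c_n, c_{n-1}, \ldots, c_2$ and eventually produces a contradiction via a would-be negative coefficient on $a_1$.

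Next, for $G$-quadraticity, I plan to build $L$ by iterated quadratic gluings. Let $H_0 = \langle 2, u_1 \rangle$ and, for $1 \leq k \leq n-1$, set $\ell_k = 2^k u_{k+1} + 2^{k-1} u_k + \cdots + u_1$ and $H_k = \langle 2 H_{k-1}, \ell_k \rangle$. Each $\ell_k$ is odd (since $u_1$ is) and equals $u_{k+1} \cdot 2^k + \ell_{k-1} \in H_{k-1}$, so each $H_k$ is a quadratic gluing in the sense of \cite[Definition 2.2]{HeS-quad}. Induction on $k$ identifies the generators of $H_k$ as $2^{k+1}, 2^k u_1, 2^{k-1} \ell_1, \ldots, \ell_k$, and in particular $H_{n-1} = L$. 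Since $H_0$ is $G$-quadratic (being 2-generated with minimal multiplicity), iterated application of \cite[Corollary 2.7]{HeS-quad} shows $L$ is $G$-quadratic. Passing from $L$ to $H = \langle L, a_{n+2} \rangle$ is the main obstacle: because $H$ has two odd generators ($a_{n+1}$ and $a_{n+2}$), it is not itself a quadratic gluing of a smaller semigroup, so \cite[Corollary 2.7]{HeS-quad} does not directly apply. I intend to exhibit a quadratic Gr\"obner basis for $I_H^*$ that extends the one for $I_L^*$, using that the initial form of the defining degree-$n$ binomial $x_1 x_{n+2} - x_2 x_3 \cdots x_{n+1}$ is the quadratic monomial $x_1 x_{n+2}$, supplemented by quadratic initial forms arising from the identities $2 a_{n+2} \in L$ and $a_j + a_{n+2}$ (for $2 \leq j \leq n+1$), each written as a short $\NN$-combination in $L$.

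For the non-Cohen-Macaulay conclusion, I apply Proposition \ref{prop:cm-gr}. Since $H$ is quadratic, Lemma \ref{lemma:quad-semi}(ii) forces $c_i = 2$ for $i \geq 2$, so the $\nu_i$ range over $\{0, 1\}$. Taking $\nu_2 = \cdots = \nu_{n+1} = 1$ and $\nu_{n+2} = 0$ gives $\sum_{i=2}^{n+1} \nu_i a_i = a_2 + \cdots + a_{n+1} = a_1 + a_{n+2} \in a_1 + H$, with $\sum \nu_i = n$. It remains to show every representation $a_1 + a_{n+2} = \sum_{i=1}^{n+2} \mu_i a_i$ with $\mu_1 \geq 1$ satisfies $\sum \mu_i < n$. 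This will follow from a clean lattice argument: writing $\mu = \mu_0 + v$ with $\mu_0 = (1, 0, \ldots, 0, 1)$ puts $v$ in the kernel of the semigroup map $\phi : \NN^{n+2} \to H$, with $v_i \geq 0$ for $1 \leq i \leq n+1$ and $v_{n+2} \geq -1$. If $v_{n+2} \geq 0$, then $v \geq 0$ lies in $\ker \phi \cap \NN^{n+2} = \{0\}$, forcing $\mu = \mu_0$ and $\sum \mu_i = 2$. If $v_{n+2} = -1$, then $v_1 a_1 + \sum_{i=2}^{n+1} v_i a_i = a_{n+2}$ realizes $a_{n+2}$ as a non-negative combination in $L$, contradicting $a_{n+2} \notin L$ from Step 1. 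Thus $\mu = \mu_0$ is the only solution and $\sum \mu_i = 2 < n$ for $n \geq 3$, so by Proposition \ref{prop:cm-gr} the tangent cone $\gr_\mm K[H]$ is not Cohen-Macaulay. The main obstacle is the Gr\"obner basis construction for $I_H^*$ in the $G$-quadraticity step; the failure of Cohen-Macaulayness then follows cleanly once the minimality $a_{n+2} \notin L$ is in hand.
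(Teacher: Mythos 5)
Your argument for the failure of the Cohen--Macaulay property is sound and takes a genuinely different route from the paper: you apply the Herzog--Garcia criterion (Proposition \ref{prop:cm-gr}) to the tuple $\nu_2=\dots=\nu_{n+1}=1$, $\nu_{n+2}=0$ and show by a positivity argument that $(1,0,\dots,0,1)$ is the only representation of $a_1+a_{n+2}$ with $\mu_1>0$, whereas the paper computes $I_H^*$ explicitly and reads off $I_H^*:x_{n+2}=(x_1,\dots,x_{n+2})$, hence depth zero. Your route is clean once $a_{n+2}\notin L=\langle a_1,\dots,a_{n+1}\rangle$ is actually established (your $2$-adic descent is only sketched; note that $a_{n+2}$ is odd while the only odd generator of $L$ is $a_{n+1}$, which shortens that verification considerably). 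The identification of $L$ as an iterated quadratic gluing, hence a $G$-quadratic complete intersection, matches the paper, which encodes the same gluings in the recurrence $2a_{i+1}=2^nu_i+a_i$.

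The genuine gap is the $G$-quadraticity of $H$ itself, which you explicitly leave as an intention (``I intend to exhibit a quadratic Gr\"obner basis for $I_H^*$\dots''). Two things are missing. First, before one can exhibit a Gr\"obner basis for $I_H^*$ one must know what $I_H^*$ is: listing relations of $H$ whose initial forms are quadratic does not determine the ideal of initial forms, since the initial forms of an arbitrary generating set of $I_H$ need not generate $I_H^*$. The paper handles this by showing that $I_{H_{n+1}}$ together with the binomials $f_1,\dots,f_{n+2}$ coming from the identities $a_{n+2}+a_i\in L$ and $2a_{n+2}\in L$ generate $I_H$, observing that $x_{n+2}$ occurs to degree at most one in every monomial of the support except $x_{n+2}^2$, and deducing $I_H^*=I_{H_{n+1}}^*+x_{n+2}(x_1,\dots,x_{n+2})$. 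Second, even granting this description of $I_H^*$, the paper does not build a Gr\"obner basis for it directly: since $x_{n+2}R_1=0$ in $R=\gr_\mm K[H]$, Conca's lemma (\cite[Lemma 4.(1)]{Conca-quadrics}) reduces $G$-quadraticity of $R$ to that of $R/(x_{n+2})\cong\gr_\mm K[H_{n+1}]$, which holds because $H_{n+1}$ is a quadratic complete intersection (Theorem \ref{thm:bounds}). Without this reduction, or an explicit Buchberger computation that you do not carry out, the $G$-quadraticity claim remains unproven.
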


Letting $n=3$, $u_1=u_2=3$ and $u_3=1$ in the construction  above,  we obtain the quadratic semigroup  $H= \langle 8, 12, 18, 13, 35 \rangle$ 
from Example \ref{ex:quad-non-cm}. Note that in Proposition \ref{prop:construction} the listed generators $a_i$ are not necessarily in increasing order, however 
we always have $a_1<a_i<a_{n+2}$ for $2\leq i \leq n+1$.

\begin{proof}
Denote $H_i=\langle a_1,\dots, a_i \rangle$ for $1\leq i \leq n+2$. It is easy to see that 
$$
\gcd(a_1,\dots, a_i)=2^{n-i+1} \text{ \quad for }i=1,\dots, n+1,
$$ 
hence $H_{n+1}$ and $H$ are  numerical semigroups.
From the defining relations we infer that 
\begin{equation}
\label{eq:recurence}
2a_{i+1}= 2^n u_i+ a_i \text{ \quad for } 2\leq i \leq n,
\end{equation}
hence the (not necessarily numerical) semigroups $H_1, \dots, H_{n+1}$ are obtained by quadratic gluings, 
are CI and $G$-quadratic.
Also, from the equations \eqref{eq:recurence} we see that 
$$
I_{H_i}= (x_2^2-x_1^{u_1})+(x_{j+1}^2-x_1^{u_j}x_j: 2\leq j <i) \quad \text{ for } 2\leq i \leq n+1.
$$ 
Next we compute $I_H$ and $I_H^*$. We note that
$$
a_{n+2}+ a_1= a_2+\dots + a_{n+1}.
$$
For $2\leq i \leq n+1$  using \eqref{eq:recurence} repeatedly we get  
\begin{eqnarray*}
a_{n+2}+ a_i &=& a_2+\dots + a_{i-1} + 2a_i + \sum_{j=i+1}^{n+1}a_j -2^n \\
	           &=& a_2+\dots + a_{i-1} +( a_{i-1} + 2^n u_{i-1} )+ \sum_{j=i+1}^{n+1}a_j  -2^n\\
						 &=& a_2+\dots + a_{i-2}+ (a_{i-2}+ 2^n u_{i-2}) + 2^n u_{i-1} + \sum_{j=i+1}^{n+1}a_j-2^n\\ 
						&\dots& \\
						 &=& 2^n(u_{i-1}+\dots +u_1 -1) + \sum_{j=i+1}^{n+1}a_j \\
						 &=& (u_{i-1}+\dots + u_1 -1) a_1 + \sum_{j=i+1}^{n+1}a_j.
\end{eqnarray*}
Arguing similarly we obtain
$$
2 a_{n+2}=  (u_1+\dots + u_n -2)a_1 +\sum_{i=2}^{n}a_i.
$$
Each of these relations produces a binomial in $I_H$:
\begin{eqnarray*}
f_1     &=& x_1 x_{n+2}- \prod_{j=2}^{n+1}x_j, \\ 
f_i     &=& x_i x_{n+2}- x_1^{(\sum_{j=1}^{i-1}u_j) -1} \prod_{j=i+1}^{n+1} x_j, \text{ for } 2\leq i \leq n+1, \text{ and} \\
f_{n+2} &=& x_{n+2}^2- x_1^{(\sum_{j=1}^n u_j)-2} \prod_{j=2}^n x_j.
\end{eqnarray*}
By inspecting these relations we remark that we can always choose a  generating set for $I_H$ consisting of binomials
such that in each monomial in the support, different from $x_{n+2}^2$, the variable  $x_{n+2}$ has degree at most one.
Therefore 
$$
I_H= (I_{H_{n+1}}, f_1, \dots, f_{n+2}).
$$

Since $n\geq 3$ and $u_1 \geq 3$ it is easy to see that $f_i^*=x_i x_{n+2}$ for $i=1,\dots, n+2$. Arguing as above we derive
$$
I_H^*=I_{H_{n+1}}^*+ x_{n+2}(x_1,\dots, x_{n+2}).
$$
This gives $I_H^*: x_{n+2}=(x_1,\dots, x_{n+2})$ and $\depth \gr_\mm K[H]=0$, hence  $ R=\gr_\mm K[H]$ is not Cohen-Macaulay.

Since $x_{n+2} R_1=0$, by Conca's \cite[Lemma 4.(1)]{Conca-quadrics} we have that $ R$ is $G$-quadratic 
if and only if 
$$ R/(t^{a_{n+2}})\cong K[x_1,\dots, x_{n+2}]/(I^*_H, x_{n+2})\cong K[x_1, \dots, x_{n+1}]/I^*_{H_{n+1}} \cong \gr_\mm K[H_{n+1}]$$
is $G$-quadratic, which is true since $H_{n+1}$ is a quadratic CI, see Theorem \ref{thm:bounds}.
\end{proof}

\medskip

Our next goal is to identify the quadratic numerical semigroups $H$ of embedding dimension $5$  and $\gr_\mm K[H]$ not Cohen-Macaulay.
\begin{Theorem}
\label{thm:5-quad-non-cm} Let $H$ be a quadratic numerical semigroup with $\embdim(H)=5$.
Then $\gr_\mm K[H]$ is not Cohen-Macaulay if and only if $H$ is of any of the following forms:
\begin{enumerate}
\item[(i)] $H=\langle 8, 4u', 4u+2u', 4u''+2u+ u', 6u+7u'+4u''-8 \rangle$ with $u, u', u''$ positive integers and $u'>1$ is odd, or
\item[(ii)] $H=\langle 8, 4u', 4u+2u', 4u''+2u+3u', 6u+9u'+4u''-8 \rangle$ with $u, u', u''$ positive integers and $u'>1$ is odd.
\end{enumerate}
Whenever $H$ is in any of these two families, it is also $G$-quadratic.
\end{Theorem}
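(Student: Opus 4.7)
The statement is a biconditional classification together with a $G$-quadratic claim, so I would prove three things: non-Cohen-Macaulayness for members of the two families, the $G$-quadratic property for those members, and the converse that any non-CM quadratic semigroup of embedding dimension five must fall into one of the two families.

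\textbf{Sufficiency.} For $H$ in either family, a direct computation gives
\[
a_2+a_3+a_4 \;=\; a_1+a_5,
\]
since $a_2+a_3+a_4$ evaluates to $6u+7u'+4u''$ in family (i) and $6u+9u'+4u''$ in family (ii), each equal to $a_5+8=a_1+a_5$. Taking $\nu=(1,1,1,0)$ in Proposition \ref{prop:cm-gr}, the plan is to check that every representation $a_1+a_5=\sum \mu_i a_i$ with $\mu_1\ge 1$ must have $\mu_1=\mu_5=1$ and the remaining $\mu_i=0$, giving $\sum \mu_i=2 < 3=\sum \nu_i$. I would split on $\mu_5$: if $\mu_5=1$, the equation $\mu_1 a_1+\mu_2 a_2+\mu_3 a_3+\mu_4 a_4=a_1$ forces $\mu_2=\mu_3=\mu_4=0$ and $\mu_1=1$ (since $a_i>a_1$ for $i\ge 2$); if $\mu_5=0$, then $a_5$ itself would lie in $\langle a_1,a_2,a_3,a_4\rangle$, contradicting the minimality of $a_5$ as a generator. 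For the $G$-quadratic claim, I would generalize Example \ref{ex:quad-non-cm} (which is the instance $u=u'=3,\,u''=1$ of family (i)): write down minimal binomial generators of $I_H$ using the three $2a_i$-relations ($i=2,3,4$), the defect relation $a_2+a_3+a_4=a_1+a_5$, and the four binomials $x_i x_5-(\text{monomial involving }x_1)$ obtained from $a_i+a_5\in a_1+H$. With a revlex-type order placing $x_5$ last, the $S$-pair calculation in Example \ref{ex:quad-non-cm} generalizes verbatim to yield a quadratic Gr\"obner basis of $I_H^*$.

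\textbf{Necessity.} Assume $\embdim(H)=5$ and $\gr_\mm K[H]$ is not Cohen-Macaulay. By Lemma \ref{lemma:quad-semi}(ii), $c_i=2$ for $i\ge 2$, and Proposition \ref{prop:cm-gr} produces $\nu\in\{0,1\}^4$ with $s=\sum_{i\ge 2}\nu_i a_i\in a_1+H$ such that every representation $s=\sum \mu_i a_i$ with $\mu_1\ge 1$ has $\sum \mu_i<\sum \nu_i$. Let $w=\sum \nu_i$. Elementary checks rule out $w\le 2$ (for $w=1$ or $2$ the obvious length-$w$ representation using $a_1$ and one or two other generators already matches $\sum \nu_i$). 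The case $w=4$ reduces to $w=3$ after relabeling by splitting off a generator from the defect equation. For $w=3$, relabeling so that $\nu=(1,1,1,0)$, the constraint $\sum \mu_i<3$ with $\mu_1\ge 1$ forces $\sum \mu_i=2$, which gives either $2a_1=a_2+a_3+a_4$ (impossible, since $a_2+a_3+a_4>3a_1>2a_1$) or $a_2+a_3+a_4=a_1+a_5$. Having established this defect equation, I would combine it with Lemma \ref{lemma:quad-semi}(i) (some $k,\ell\ge 2$ with $a_1\mid a_k+a_\ell$) and the three quadratic relations $2a_i\in\langle\text{others}\rangle$ for $i=2,3,4$: a parity analysis forces $a_1=8$, splits the generators into three even ($a_1,a_2,a_3$) and two odd ($a_4,a_5$), and then the two possible decompositions of $2a_4$ in $\langle a_1,a_2,a_3\rangle$ produce the two families, the parameters $u,u',u''$ tracking the coefficients in the expressions of $a_2,a_3,a_4$, with $a_5=a_2+a_3+a_4-a_1$.

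\textbf{Main obstacle.} The heart of the proof is the final parameterization in the necessity direction: forcing $a_1=8$ and narrowing $2a_4$ to precisely two decompositions. One must carefully rule out every other multiplicity $a_1\in\{5,\dots,16\}\setminus\{8\}$ (with $a_1\in\{5,12,16\}$ already covered by Proposition \ref{prop:cm-small-n} or Theorem \ref{thm:bounds}) and check that no other parity distribution among $a_2,a_3,a_4$ is compatible with both the defect equation $a_2+a_3+a_4=a_1+a_5$ and the three quadratic $2a_i$-relations. The bifurcation into two families reflects the two ways the odd parameter $u'$ can enter $a_4$ (as $u'$ or as $3u'$), corresponding to the two allowable writings of $2a_4$ in terms of $a_1,a_2,a_3$.
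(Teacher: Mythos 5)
Your skeleton matches the paper's: Herzog--Garcia's criterion (Proposition \ref{prop:cm-gr}) plus $c_i=2$ from Lemma \ref{lemma:quad-semi}(ii) reduces non-Cohen-Macaulayness to the single defect equation $a_2+a_3+a_4=a_1+a_5$, and the classification then comes from the normalized expressions for $2a_2,2a_3,2a_4$. Your sufficiency argument for non-CM is correct and in fact a clean alternative to the paper's (which instead computes $I_H^*:x_5=(x_1,\dots,x_5)$ to get depth $0$): with $\nu=(1,1,1,0)$, every representation of $a_1+a_5$ with $\mu_1\geq 1$ has length $2<3$, since $\mu_5\leq 1$ and $\mu_5=0$ would put $a_5$ in $\langle a_1,\dots,a_4\rangle$. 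That part stands.

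There are, however, two genuine gaps. First, the $G$-quadratic claim is asserted rather than proved. Before any $S$-pair computation you must exhibit a generating set of $I_H$, which is the real work: the paper does this by showing $\langle a_1,\dots,a_4\rangle$ is an iterated quadratic gluing (hence a quadratic CI with known defining ideal) and then adjoining the binomials coming from $a_i+a_5\in a_1+H$ and $2a_5\in H$; it then avoids the Gr\"obner computation entirely via Conca's lemma, since $x_5R_1=0$ reduces $G$-quadraticity of $R$ to that of $\gr_\mm K[H_4]$. Moreover ``generalizes verbatim'' from Example \ref{ex:quad-non-cm} fails for family (ii), whose relations have a different shape (e.g.\ $a_5+a_4=(u+u''-1)a_1+3a_2$, so the complementary monomial is $x_1^{u+u''-1}x_2^3$ rather than a product of distinct variables). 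Second, the necessity direction compresses the entire content of the paper's Section \ref{sec:long} (roughly four pages of case analysis on the coefficients in $2a_2=ua_1+a_3$ versus $2a_2=ua_1+a_4$, then on $2a_3$ and $2a_4$) into one sentence, and that sentence contains claims that are false as stated: with the generators in increasing order the parity pattern is \emph{not} always ``$a_1,a_2,a_3$ even, $a_4,a_5$ odd'' (in the paper's solutions \eqref{eq:sol3} and \eqref{eq:sol6} the odd generator among $a_2,a_3,a_4$ is $a_2$, respectively $a_3$); the case $\nu_2=\dots=\nu_5=1$ is excluded outright ($\mu_1>4$), not ``reduced to $w=3$''; and $a_1=12$ is not eliminated by Theorem \ref{thm:bounds} (part (iv) is only a necessary condition on CM tangent cones), but must be killed in each sub-case by divisibility-by-$3$ arguments. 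So the plan points in the right direction but the two load-bearing pieces --- the generation of $I_H$ for the $G$-quadratic claim and the exhaustive coefficient analysis forcing $a_1=8$ and exactly the two families --- are missing.
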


\begin{proof}
We first assume $H$ is in any of the specified families and we show that $\gr_\mm K[H]$ is not Cohen-Macaulay.
We label $a_1,\dots, a_5$ the  generators  of $H$ in the given ordering.

For $(i)$ we are in the situation described in Proposition \ref{prop:construction} for $n=3$, $u_1= u'$, $u_2=u$ and $u_3=u''$, 
hence the conclusion follows.

For $(ii)$ we note that the semigroup $L=\langle a_1, a_2, a_3, a_4\rangle = \langle 2 \langle 4, 2u', 2u+u'\rangle, a_4 \rangle$ 
is obtained by a quadratic gluing since $a_4=u''\cdot 4+(2u+u')+ (2u')$ is odd. 
By \cite[Proposition 3.6]{HeS-quad} the semigroup $\langle 4, 2u', 2u+u' \rangle$ is a quadratic complete intersection, 
and the same holds for $L$ by Delorme's \cite[Proposition 9]{Delorme}  and by
\cite[Corollary 2.7]{HeS-quad}. 

It is straightforward to check that 
\begin{eqnarray*}
a_5+a_1 &=& a_2+a_3+a_4,\\
a_5+ a_2 &=& (u'-1)a_1+ a_3+a_4,\\
a_5+ a_3 &=& (u+u'-1)a_1+ a_4,\\
a_5+ a_4 &=& (u+u''-1) a_1 + 3 a_2,\\
2 a_5 &=& (u+2u'+u''-2) a_1 + a_3,
\end{eqnarray*}
hence $I^*_H:x_5=(x_1, \dots, x_5)$ and $\gr_\mm K[H]$ is not Cohen-Macaulay.
 Arguing as in the proof of Proposition \ref{prop:construction} we get that $H$ is $G$-quadratic.

The direct implication is  proved separately in Section \ref{sec:long}.
\end{proof}


\section{Koszul and $G$-quadraticity}
\label{sec:g-quadratic}

As an application of Theorem \ref{thm:5-quad-non-cm}, under some   restrictions on the field $K$, we prove that if $H$ is a   numerical semigroup and $\embdim(H) \leq 5$, then
$H$ is Koszul if and only if it is $G$-quadratic. 
We wonder if this statement holds for arbitrary embedding dimension.

Let $R=\oplus_{i\geq 0} R_i$  be a  standard graded $K$-algebra.
A {\em Koszul filtration}  for $R$  is a family $\mathcal{F}$ of ideals of $R$ generated by linear forms such that $0$ and 
the maximal homogeneous ideal of $R$ belong to $\mathcal{F}$ and for every $I\in \mathcal{F}$ different from $0$, there exists $J \in \mathcal{F}$ such that
$J\subset I$, $I/J$ is cyclic and $J:I \in \mathcal{F}$. A Koszul filtration that is totally ordered with respect to inclusion is called a {\em Gr\"obner flag}.
It is known that if $R$ has a Koszul filtration, then it is Koszul. Also, by \cite[Theorem 2.4]{CRV}, if $R$ has a Gr\"obner flag, then $R$ is $G$-quadratic.
We refer to the original papers  \cite{CTV}, \cite{CRV} and to the recent survey \cite{CdNR} of Conca et al.  for more properties.

For easier reference we group in the following lemma some known results about lifting Koszul-like properties modulo a linear form. 

\begin{Lemma}
\label{lemma:lifting}
Let $R$ be a standard graded $K$-algebra and $x$ a linear form that is regular on $R$. If $R/(x)$ has  property $(\mathcal{P})$, then so does $R$, where
$\mathcal{P}$ stands for Koszul, $G$-quadratic, admits a Koszul filtration, or a Gr\"obner flag.  
\end{Lemma}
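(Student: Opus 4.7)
The plan is to treat each of the four properties separately; in each case the goal is to lift a witnessing structure from $R/(x)$ back up to $R$, using the regularity of the linear form $x$.

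For the Koszul property, I would invoke the classical change-of-rings formula for the bigraded Poincar\'e series of the residue field across a regular linear form:
\[
P^R_K(s,t) \;=\; (1+st)\,P^{R/(x)}_K(s,t).
\]
If $R/(x)$ is Koszul, the right-hand side is a polynomial in the product $st$, hence so is the left-hand side, and therefore $R$ is Koszul. This fact and the underlying spectral sequence are collected in \cite{CdNR}.

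For the $G$-quadratic case, I would choose a presentation $R = K[y_0, y_1, \dots, y_n]/I$ with $y_0 \mapsto x$, so that $R/(x) \cong K[y_1, \dots, y_n]/\bar{I}$. By hypothesis, after a linear change of coordinates in $y_1, \dots, y_n$, the ideal $\bar{I}$ admits a quadratic Gr\"obner basis $G'$ with respect to some term order $\tau$. Extend $\tau$ to a term order on $K[y_0, \dots, y_n]$ giving $y_0$ the lowest priority (an elimination order for $\{y_1, \dots, y_n\}$), and lift $G'$ to a set of quadrics $G \subset I$ with the same initial monomials. The regularity of $x$ translates to $(I:y_0) = I$, which prevents any minimal initial monomial of $I$ from being divisible by $y_0$; a Buchberger-type verification then shows that $G$ is a quadratic Gr\"obner basis of $I$. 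The same construction preserves the total ordering by inclusion, so it simultaneously handles the Gr\"obner flag case.

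For the Koszul filtration case, given $\mathcal{F}'$ on $R/(x)$, I would define
\[
\mathcal{F} \;=\; \{\pi^{-1}(J) : J \in \mathcal{F}'\} \cup \{0\},
\]
where $\pi : R \to R/(x)$. Each $\pi^{-1}(J)$ is generated by linear forms, namely $x$ together with lifts of the generators of $J$, and the identity $\pi^{-1}(J'):\pi^{-1}(J) = \pi^{-1}(J':J)$ is valid because $x$ is a nonzerodivisor. All axioms of a Koszul filtration then transfer directly from $\mathcal{F}'$ to $\mathcal{F}$. The main obstacle is the $G$-quadratic step: a priori, the lift of a quadratic Gr\"obner basis of $\bar I$ need not be a Gr\"obner basis of $I$, and it is precisely the nonzero-divisor property of $x$ that rules out extraneous leading monomials involving $y_0$ and makes the Buchberger verification close.
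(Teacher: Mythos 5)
Your argument is correct in substance, but note that the paper's own proof of Lemma \ref{lemma:lifting} consists entirely of citations: Backelin--Fr\"oberg \cite[Lemma 2]{BF-poincare} for Koszulness, Conca \cite[Lemma 4.(2)]{Conca-quadrics} for $G$-quadraticity, and \cite[Lemma 2.11.(a)]{CRV} for Gr\"obner flags, with the Koszul filtration case obtained ``by the same idea''. What you wrote is essentially a reconstruction of those cited proofs, and it is sound: the factorization $P^R_K(s,t)=(1+st)\,P^{R/(x)}_K(s,t)$ is exactly the Backelin--Fr\"oberg mechanism, and the preimage filtration $\{\pi^{-1}(J):J\in\mathcal{F}'\}\cup\{0\}$ is the CRV mechanism (the one axiom that genuinely uses regularity of $x$ is the bottom step $0:_R(x)=0$, which you should state explicitly). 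Two spots deserve tightening. In the $G$-quadratic case, ``a Buchberger-type verification'' hides the actual argument: take $\sigma$ on $K[y_0,\dots,y_n]$ comparing $y_0$-degree first (smaller is larger), ties broken by $\tau$; then $y_0\mid\ini_\sigma(f)$ forces every monomial of the homogeneous $f$ to be divisible by $y_0$, so $f=y_0f'$ with $f'\in I:y_0=I$, while $y_0\nmid\ini_\sigma(f)$ gives $\ini_\sigma(f)=\ini_\tau(f\bmod y_0)\in\ini_\tau(\bar I)$; together with your lifts this yields $\ini_\sigma(I)=\ini_\tau(\bar I)\,K[y_0,\dots,y_n]$ directly, with no $S$-pair computation (alternatively, compare Hilbert functions via $H_R=H_{R/(x)}/(1-t)$). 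Also, the sentence claiming that the Gr\"obner basis lifting ``simultaneously handles the Gr\"obner flag case'' is misplaced: a Gr\"obner flag is a totally ordered Koszul filtration, so it lifts through your preimage construction (which preserves inclusions and inserts $0\subset(x)$ at the bottom), not through the term-order argument. Neither point is a genuine gap; both are fixable in a line or two.
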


\begin{proof} The statements for Koszul and $G$--quadraticity are due to Backelin and Fr\"oberg in \cite[Lemma 2]{BF-poincare}, respectively to Conca in 
\cite[Lemma 4.(2)]{Conca-quadrics}. 

That any Gr\"obner flag may be lifted from $R/(x)$ to $R$ is proved in \cite[Lemma 2.11.(a)]{CRV}. Using the same idea one can
produce a  Koszul filtration for $R$ from a Koszul filtration of  $R/(x)$.
\end{proof}

For $R$ as above its $h$-polyonomial is defined as the numerator $h(t)$ in the writing of the Hilbert series
$H_R(t)=\Sigma_{i\geq 0} \dim_K R_i t^i= h(t)/(1-t)^d$ with $h(1)\neq 0$. If $h(t)=\Sigma_{i\geq 0}h_i t^i$,  the sequence of coefficients $(h_0, h_1, \dots)$ is called  the $h$-vector of $R$.

It is clear that if $x$ is a linear form which is regular on $R$, the $h$-polynomial and the $h$-vector of $R$ and $R/(x)$ are the same.
In this context, the Cohen-Macaulay property for $R$ facilitates the computation of the $h$-vector of $R$ by reduction to the artinian case.

For a quadratic (artinian) $K$-algebra  the Koszul property or the existence of a quadratic Gr\"obner basis, or of a Gr\"obner flag   
may sometimes be deduced by inspecting the $h$-vector. 
We collect  some   results on this topic that we will use later on.

\begin{Lemma} 
\label{lemma:tools}
Let $R=\oplus_{i\geq 0}R_i$ be a quadratic standard graded $K$-algebra.
\begin{enumerate} 
\item [(i)] (Conca, Rossi, Valla, \cite[Proposition 2.12]{CRV}) If $H_R(t)=1+nt+t^2$ with $n>1$, then $R$ has a Gr\"obner flag.
\item [(ii)] (Backelin, \cite[Theorem 4.8]{Backelin-thesis}) If $\dim_K R_2 \leq 2$, then  $R$ is Koszul.
\end{enumerate}
\noindent Assume the field $K$ is algebraically closed and of characteristic $\neq 2$.
\begin{enumerate}
\item [(iii)] (Conca, \cite[Theorem 1]{Conca-quadrics})  If $\dim_K R_2 \leq 2$, then $R$ is $G$-quadratic if and only if  
it is not graded isomorphic to the $K$-algebra $K[x,y,z]/(x^2, xy, y^2-xz, yz)$ or its trivial fiber extentions.
\item [(iv)] (Conca, \cite[Theorem 1.1]{Conca-quadrics-codim3}) If $R$ is artinian and $\dim_K R_2=3$, then $R$ is Koszul. Moreover, $R$ is $G$-quadratic
if and only if it is not a trivial fiber extension of $K[x,y,z]/I$, where $I$ is a complete  intersection of three quadrics not containing the square of a linear form.
\end{enumerate}
\end{Lemma}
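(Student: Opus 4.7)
The plan is simple: each of the four items in the lemma is a verbatim citation of a theorem proved elsewhere in the literature, so the ``proof'' reduces to attributing each claim to its original source and verifying that the hypotheses match. Specifically, (i) is \cite[Proposition 2.12]{CRV}, (ii) is \cite[Theorem 4.8]{Backelin-thesis}, (iii) is \cite[Theorem 1]{Conca-quadrics}, and (iv) is \cite[Theorem 1.1]{Conca-quadrics-codim3}. No new argument on the author's side is required; one simply has to check that in each intended application the standing assumptions of this lemma (quadratic, standard graded, and for (iii)--(iv) the hypotheses on $K$) are indeed in force.

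For the reader who wants to use the lemma as a tool in Section~\ref{sec:g-quadratic} it is worth recalling the flavor of each cited proof. In (i), a Gr\"obner flag is produced inductively: with $\dim_K R_2 = 1$, a generic linear form $\ell$ yields a quotient $R/(\ell)$ whose Hilbert function still has the shape $1+(n-1)t+t^2$ after removing socle, and iterating furnishes the required totally ordered chain of ideals generated by linear forms. In (ii), Backelin exploits the fact that with at most two independent quadratic relations the quadratic dual $R^!$ is sufficiently small to show that the Ext-algebra of $K$ over $R$ is generated in degree one, which forces a linear resolution of $K$.

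For (iii) and (iv), Conca classifies quadratic ideals modulo a generic change of coordinates when $\dim_K R_2 \leq 3$. After reducing to the artinian case, $R$ can be brought into one of a short list of normal forms, and for each form one checks directly whether the defining ideal admits a quadratic Gr\"obner basis in some coordinate system. The exceptional algebra $K[x,y,z]/(x^2,xy,y^2-xz,yz)$ in (iii) and the complete intersection of three quadrics without the square of a linear form in (iv) appear precisely as the normal forms that resist every such ordering. The assumptions that $K$ be algebraically closed and of characteristic $\neq 2$ enter at this step, to legalize completing squares and to ensure that the classification up to the action of $\mathrm{GL}_n(K)$ is effective.

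The only real obstacle, if one were to reprove any of this from scratch, lies in (iv): the Koszulness statement for $\dim_K R_2 = 3$ in the artinian setting rests on Conca's delicate Betti-number bounds and deformation arguments in codimension three, and enumerating the normal forms that obstruct the existence of a quadratic Gr\"obner basis requires nontrivial orbit analysis under $\mathrm{GL}_n(K)$. The other three items are comparatively light, and in the present paper one will invoke them only as black boxes.
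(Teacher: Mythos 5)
Your proposal matches the paper exactly: the paper offers no proof of this lemma, treating each item as a direct citation of the indicated result (Conca--Rossi--Valla, Backelin, and Conca's two papers on spaces of quadrics), with the hypotheses on $K$ stated precisely so that the cited theorems apply. Your additional sketches of the cited arguments are harmless exposition but are not required, since the lemma is established by attribution alone.
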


It is now easier to prove the announced statement for the case when $\embdim(H)$ is less than $5$.

\begin{Theorem}
\label{thm:edim234}
Let $H$ be a   numerical semigroup.
If $\embdim(H) <5$, then $H$ is quadratic if and only if $H$ is  $G$-quadratic.
\end{Theorem}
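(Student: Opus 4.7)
The plan is to prove the nontrivial implication: if $H$ is quadratic with $\embdim(H)<5$, then $H$ is $G$-quadratic (the converse is immediate from the definitions). First, by Proposition \ref{prop:cm-small-n}, $\gr_\mm K[H]$ is Cohen-Macaulay. Since its Krull dimension equals $1$, the image of $t^{e(H)}$ is a regular linear form on the tangent cone, and modding it out yields an artinian quadratic $K$-algebra $\bar R$ with the same Hilbert series numerator as $\gr_\mm K[H]$. By Lemma \ref{lemma:lifting} it then suffices to establish that $\bar R$ is $G$-quadratic.

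I would then split on $n=\embdim(H)\in\{2,3,4\}$. For $n=2$, one has $\gr_\mm K[H]\cong K[x_1,x_2]/(x_2^2)$, which is visibly $G$-quadratic. For $n=3$, Theorem \ref{thm:bounds}(i) restricts $e(H)$ to $\{3,4\}=\{n,2^{n-1}\}$, and the ``moreover'' clause of Theorem \ref{thm:bounds} covers both extremes. For $n=4$, combining Theorem \ref{thm:bounds}(i) with the Cohen-Macaulay upper bound in Theorem \ref{thm:bounds}(iv) gives $e(H)\in\{4,5,6,8\}$. Three of these---$e(H)=4=n$, $e(H)=6=2^{n-1}-2^{n-3}$, and $e(H)=8=2^{n-1}$---are again handled directly by the ``moreover'' clause of Theorem \ref{thm:bounds}.

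The one case requiring genuine new input is $n=4$ with $e(H)=5$. Here $\bar R$ is standard graded artinian with $h_1=n-1=3$ and $\sum_i h_i=e(H)=5$, so $\sum_{i\geq 2}h_i=1$. If $h_2=0$ then $\bar R_2=\bar R_1\cdot\bar R_1=0$, forcing $\bar R_i=0$ for all $i\geq 2$ and contradicting $\sum_{i\geq 2}h_i=1$; hence $h_2=1$ and the Hilbert series of $\bar R$ is $1+3t+t^2$. At this point Lemma \ref{lemma:tools}(i) furnishes a Gr\"obner flag on $\bar R$, and Lemma \ref{lemma:lifting} lifts it back to $\gr_\mm K[H]$, showing that $H$ is $G$-quadratic.

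The main---and essentially only---non-routine step is pinning down the $h$-vector in the case $n=4$, $e(H)=5$ so that the Conca-Rossi-Valla criterion applies; once that is done, the Gr\"obner flag lifting is automatic. Note that no hypothesis on $K$ beyond being infinite is needed, consistent with the absence of such assumptions in the statement of the theorem.
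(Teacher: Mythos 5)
Your proposal is correct and follows essentially the same route as the paper: reduce to the Cohen-Macaulay case via Proposition \ref{prop:cm-small-n}, dispose of all multiplicities except $n=4$, $e(H)=5$ by the ``moreover'' clause of Theorem \ref{thm:bounds}, and handle that remaining case by identifying the $h$-vector $(1,3,1)$ and invoking Lemma \ref{lemma:tools}(i) together with the lifting Lemma \ref{lemma:lifting}. The only difference is that you spell out why $h_2=1$ (ruling out $h_2=0$ by the standard grading), a detail the paper leaves implicit.
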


\begin{proof}
Denote $n=\embdim(H)$ and $R=\gr_\mm K[H]$. Assume $H$ is quadratic.
 If $n=2$  then $R$ is a hypersurface ring, and we are done. If $n=3$, then $e(H) \in \{3, 4\}$ and the result follows from  Theorem \ref{thm:bounds}.

Assume  $n=4$ and then $R$ is Cohen-Macaulay by Proposition \ref{prop:cm-small-n}. 
Denote $\bar{R}=R/(x_1)$. Using Theorem \ref{thm:bounds} we get that $e(H) \in \{4,5,6,8\}$. If $e(H)\neq 5$, then  we are in one of the cases covered by Theorem \ref{thm:bounds}
and the conclusion follows. If $e(H)=5$, then $\ell(\bar{R})=e(H)$ and $\bar{R}$ has the $h$-vector $(1,3,1)$.  By Lemma \ref{lemma:tools}(i) we obtain that $\bar{R}$ has a Gr\"obner flag
which by Lemma \ref{lemma:lifting} produces a Gr\"obner flag for $R$.  Hence $R$ is $G$-quadratic.
\end{proof}

For  a $5$-generated quadratic numerical semigroup there are more possible multiplicities for which the Koszul
 property does not follow easily from Theorem \ref{thm:bounds}.
Our analysis depends on the possible $h$-polynomial of $R=\gr_\mm K[H]$ when $H$ is quadratic.
 We combine the results listed in Lemma \ref{lemma:tools} and Theorem \ref{thm:5-quad-non-cm}
 with computer testing in Singular (\cite{Sing}) for the remaining situations. 
Before giving the main result, we describe our screening strategy performed on the computer.
 
Working under  the assumption that $x_1$ is regular on $R$, 
we analysed the possible quadratic ideals  $J=I^*_H \mod (x_1)$ in $K[x_2, x_3, x_4, x_5]$ generated by monomials and binomials and subject to some restrictions due to our setup. 
Choose $\mathcal{G}$ any minimal generating set for $I_H$ consisting of binomials.
If we denote $\mathcal{G}^*$ the collection of their initial forms, by \cite[Lemma 1.5]{HeS-quad} $\mathcal{G}^*$  generates $I_H^*$ minimally.
Since $x_1$ is regular on $R$, the set $\mathcal{H}$ obtained by letting $x_1=0$ in $\mathcal{G}^*$ is a minimal generating set for $J$ consisting of 
quadratic monomials and possibly of binomials.
 
Since the variables correspond to the generators of $H$ taken in increasing order, there is only a short list of  possible binomials in $\mathcal{H}$: 
\begin{eqnarray*}
p_0=x_2x_5 -x_3x_4, \quad p_1=x_3^2-x_2x_4, \quad p_2=x_3^2-x_2x_5, \\ 
p_3 = x_4^2-x_2 x_5,\quad p_4=x_4^2-x_3x_5. \quad \quad\quad \quad
\end{eqnarray*}
Clearly $p_1$ and $p_2$ can not occur at the same time in $\mathcal{H}$, otherwise $p_1-p_2=x_2(x_4-x_5) \in I_H$, hence $x_4-x_5 \in I_H$, which is false.
Similarly, $p_3$ and $p_4$ may not both occur $\mathcal{H}$. Hence at most $3$ binomials may occur simultaneously in $\mathcal{H}$.

On the other hand, if $p_0$ and $p_1$ occur in $\mathcal{H}$, these occur in $I_H$, too. Hence $a_2+a_5=a_3+a_4$ and $2a_3=a_2+a_4$. Adding these equations we get 
 $a_3+a_5=2a_4$, therefore $p_4 \in I_H$. We get that either  $p_4 \in \mathcal{H}$, or that the monomials in its support are in $\mathcal{H}$.

Arguing similarly we see that if any two of $p_0$, $p_1$ and $p_4$ occur  in $\mathcal{H}$, then the remaining one is in $I_H$ and in $J$.

By Lemma \ref{lemma:quad-semi} we see that $x_2^2, x_5^2 \in  \mathcal{H}$.  Also, if $x_3^2\notin \mathcal{H}$ then either $p_1$ or $p_2$ is in $\mathcal{H}$. 
Similarly, if $x_4^2\notin \mathcal{H}$, then either $p_3$ or $p_4$ is in $\mathcal{H}$.

\begin{Theorem}
\label{thm:5semi-koszul}
Let $H$ be a   numerical semigroup with  $\embdim(H) =5$. 

If the field $K$ is algebraically closed and of characteristic $\neq 2$, then $H$ is Koszul if and only if it is $G$-quadratic.
\end{Theorem}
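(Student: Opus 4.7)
The ``if'' direction is the standard implication that every $G$-quadratic algebra is Koszul, so I concentrate on the converse. Assume $H$ is Koszul, hence in particular quadratic, and set $R = \gr_\mm K[H]$. If $R$ is not Cohen-Macaulay then by Theorem \ref{thm:5-quad-non-cm} the semigroup $H$ belongs to one of the two displayed families, and that same theorem already asserts every such $H$ is $G$-quadratic, so there is nothing further to do. From now on I assume $R$ is Cohen-Macaulay. Then $t^{e(H)}$ is a regular linear form, the quotient $\bar R = R/(t^{e(H)})$ is an artinian quadratic $K$-algebra with the same $h$-vector as $R$, and by Lemma \ref{lemma:lifting} it suffices to show $\bar R$ is $G$-quadratic. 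By Theorem \ref{thm:bounds}(i) one has $5 \leq e(H) \leq 16$. The extremes $e(H)=5$ and $e(H)=16$ are settled by parts (ii) and (iii) of Theorem \ref{thm:bounds}, and the borderline value $e(H) = 12 = 2^4-2^2$ is settled by the last sentence of the same theorem. For the remaining multiplicities the $h$-vector of $\bar R$ has the form $(1,4,h_2,\dots)$ with $1 \leq h_2 \leq 5$.

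The plan is to split on $h_2$ and apply Lemma \ref{lemma:tools}. When the $h$-polynomial equals $1 + 4t + t^2$, Lemma \ref{lemma:tools}(i) endows $\bar R$ with a Gröbner flag that lifts to $R$ via Lemma \ref{lemma:lifting}. The remaining cases with $h_2 \leq 2$ are handled by Lemma \ref{lemma:tools}(iii) and the case $h_2 = 3$ by Lemma \ref{lemma:tools}(iv), in each instance modulo a single obstruction: $\bar R$ must not be a trivial fiber extension of Conca's algebra $K[x,y,z]/(x^2, xy, y^2-xz, yz)$ (for $h_2 \leq 2$) or of $K[x,y,z]/I$ with $I$ a complete intersection of three quadrics not containing the square of a linear form (for $h_2 = 3$). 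To dispatch these obstructions I would use the enumeration of admissible $J = I_H^* \bmod x_1$ described in the paragraphs preceding the theorem: the minimal generating set $\mathcal{H}$ of $J$ contains $x_2^2$ and $x_5^2$, contains exactly one of $p_1, p_2$ when $x_3^2 \notin \mathcal{H}$, exactly one of $p_3, p_4$ when $x_4^2 \notin \mathcal{H}$, and involves at most three of $p_0, \dots, p_4$ together. This sharply restricts the possible shapes of $J$ and permits a direct comparison against the two obstruction forms; the only residual configurations that resist this match are the two recorded in Remark \ref{rem:h3-nonflag}, which a computer search shows are never realized by an actual quadratic $5$-generated $H$. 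The remaining values $h_2 \in \{4,5\}$ correspond to a short explicit list of $h$-vectors recorded in Table \ref{table:multiplicities}, each of which I would dispose of by exhibiting a concrete quadratic Gröbner basis for $I_H^*$ in Singular.

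The main obstacle is the $h_2 \in \{2,3\}$ analysis: one must verify that the combinatorial constraints on $\mathcal{H}$ stemming from the arithmetic of $H$ are incompatible with Conca's obstruction algebras, and then confirm by computation that neither of the two borderline non-flag configurations of Remark \ref{rem:h3-nonflag} arises from an actual numerical semigroup. Once this matching is completed, Lemma \ref{lemma:lifting} lifts $G$-quadraticity from $\bar R$ to $R$ and the proof is finished.
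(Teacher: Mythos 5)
Your overall architecture matches the paper's: reduce to the Cohen--Macaulay case via Theorem \ref{thm:5-quad-non-cm}, pass to the artinian reduction $\bar R=R/(t^{e(H)})$, split on $h_2$, and invoke Lemma \ref{lemma:tools} together with Lemma \ref{lemma:lifting}. But there is a genuine gap in your treatment of $h_2\in\{4,5\}$. You propose to ``dispose of'' these cases by exhibiting quadratic Gr\"obner bases, yet several of the configurations occurring there are \emph{not Koszul at all}: for $h_2=5$ with $h_3\le 1$ (i.e.\ $e(H)=10$ or $11$) and for some candidates with $h_2=4$, $h_3=0$ (i.e.\ $e(H)=9$, e.g.\ $\langle 9,17,20,23,25\rangle$), no quadratic Gr\"obner basis exists because the algebra is not even Koszul. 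Since the implication being proved is ``Koszul $\Rightarrow$ $G$-quadratic,'' these cases must be closed by \emph{certifying non-Koszulness}, so that the hypothesis fails; your method has no tool for this and would simply not terminate successfully. The paper handles $h_2=5$, $h_3\le 1$ by computing that $1/H_{\bar R}(-t)$ has a negative coefficient (so $\bar R$ cannot be Koszul, as a Koszul algebra's Poincar\'e series equals $1/H(-t)$), and handles the non-Koszul candidates at $h_2=4$ by checking that the resolution of $K$ over $\bar R$ becomes nonlinear within five steps. Some such non-Koszulness criterion is an essential missing ingredient of your plan. (You also need the Eisenbud--Green--Harris/Macaulay growth bound to pin down $h_3$ and $h_4$ from $h_2$ before the case analysis can even be organized; you implicitly borrow this from Table \ref{table:multiplicities}, which is itself an output of the proof.)

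A secondary problem: for $h_2=3$ you claim the two ideals of Remark \ref{rem:h3-nonflag} ``resist the match'' against Conca's obstruction and must be excluded because ``a computer search shows [they] are never realized.'' The paper explicitly leaves realizability of $J_2,J_3$ open, so you cannot lean on that. Fortunately you do not need to: those ideals contain the two linearly independent squares $x_2^2$ and $x_5^2$, while a trivial fiber extension of $K[x,y,z]/I$ with $I$ a complete intersection of three quadrics containing no square of a linear form has only a one-dimensional space of squares in its defining ideal; hence Lemma \ref{lemma:tools}(iv) applies directly and $J_2,J_3$ are $G$-quadratic (after a change of coordinates --- the Remark only shows they have no quadratic Gr\"obner basis in the given presentation). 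Similarly, for $h_2\le 2$ the obstruction of Lemma \ref{lemma:tools}(iii) is ruled out simply because $\bar R$ is artinian while Conca's exceptional algebra and its trivial fiber extensions are not; no enumeration of $\mathcal H$ is needed there.
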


\begin{proof}
By Theorem \ref{thm:5-quad-non-cm}, if $R=S/I_H^*\cong \gr_\mm K[H]$ is not Cohen-Macaulay then it is $G$-quadratic. 
So it is enough to consider the case when $x_1$ is a nonzero divisor on $R$. 
For $i=2,\dots, 5$, there exist distinct polynomials in $I_H^*$ of the form $f_i= x_i^2-g_i$, where $g_i$ is either zero or a quadratic monomial which is not a pure power. 
Denoting by an overbar $\bar{}$ the image in $\bar{R}=R/(x_1)$ and $<$ the revlex term order induced by $x_5>x_4>\dots$, we have $\ini_<(\bar{f_i})=x_i^2$ for $2\leq i\leq 5$. 
Therefore the Hilbert series of the artinian  graded algebras $\bar{R}$ and $K[x_2,\dots, x_5]/\ini_<(J)$ coincide and  moreover $\ini_<(J)$ contains the squares of all the variables. 
 
In this situation, as noted by Eisenbud, Green and Harris in \cite[Section 4]{EGH-highCastelnuovo}, for any $m$,  if  $h_m=\dim_K \bar{R}_m$ has the binomial decomposition
$$
h_m=  \binom{b_m}{m}+ \binom{b_{m-1}}{m-1}+\dots + \binom{b_1}{1}
$$ 
with $b_m> b_{m-1} > \dots > b_1 \geq 0$, then 
\begin{equation}
\label{eq:growth}
h_{m+1} \leq \binom{b_m}{m+1}+ \binom{b_{m-1}}{m}+\dots + \binom{b_1}{2}.
\end{equation}

The $h$-vector of $\bar{R}$ is $(1, 4, h_2, h_3, h_4)$. By \eqref{eq:growth} we have $0\leq h_2 \leq \binom{4}{2}=6$. 
Our analysis depends on the possible values for $h_2$.
\begin{itemize}
\item
If $h_2=0$, then $\ell(\bar{R})=e(H)=5$, and by Theorem \ref{thm:bounds}, $H$ is $G$-quadratic.

\item
If $h_2=1$, then by \eqref{eq:growth} we get $h_3=0$, hence the $h$-vector of $\bar{R}$ is $(1,4,1)$. 
By Lemma \ref{lemma:tools}(i) and Lemma \ref{lemma:lifting} we get that $\bar{R}$ and $R$ have a Gr\"obner flag, hence they are $G$-quadratic.
\item
If $h_2=2=\binom{2}{2}+ \binom{1}{1}$, from \eqref{eq:growth} we deduce that $h_3=0$. 
  Since $J$ is artinian and the field $K$ is algebraically closed of characteristic $\neq 2$, by Lemma \ref{lemma:tools}(iii) we get that $\bar{R}$, hence also $R$, are $G$-quadratic.

Testing with Singular (\cite{Sing}) the possible candidates for $J$,  it is easy to check that all of them have a quadratic Gr\"obner 
basis with respect to revlex (usually induced by $x_2>x_3>x_4>x_5$). 
All of them possess  a Koszul filtration and in all but one situation presented in Remark \ref{rem:h2-nonflag} there exists a 
Gr\"obner flag with basis $\{x_2, x_3,x_4, x_5\}$.

\item	
If $h_2=3= \binom{3}{2}$, then $h_3 \leq 1$ and $h_4=0$.	Note that $J$ has at least two linearly independent squares of linear forms, namely $x_2^2$ and $x_5^2$.
Under the assumption that $K$ is algebraically closed and of characteristic $\neq 2$, by Lemma \ref{lemma:tools}(iv) 
we infer that $\bar{R}$, hence also $R$, is $G$-quadratic.

Scanning the possible candidates for $J$ by the method described above it turns out that there always exists a Koszul filtration for $\bar{R}$, without any restriction on the field $K$. 
In most cases this filtration is a Gr\"obner flag and the ideal $J$ has a quadratic Gr\"obner basis with respect to revlex (usually induced by $x_2>x_3>x_4>x_5$).
There are though, up to a permutation of the variables, a couple of candidates for $J$ which do not admit a quadratic Gr\"obner basis with respect to any term order.
We present these exceptions in Remark \ref{rem:h3-nonflag}.
\item
If $h_2=4= \binom{3}{2}+ \binom{1}{1}$, then $h_3 \leq 1$ and $h_4=0$.
We scanned the possible candidates for $J$ and we eliminated those ideals where the resolution of $K$ over $\bar{R}$ (computed with Singular \cite{Sing})   
is becoming nonlinear after at most $5$ steps.
All the other candidates had a quadratic Gr\"obner bases with respect to revlex (usually induced by $x_2>x_3>x_4>x_5$) and even a Gr\"obner flag.
All the non-Koszul ideals were among  those with $h_3=0$, hence with $e(H)=9$.
	
\item	
If $h_2=5= \binom{3}{2}+ \binom{2}{1}$, then by \eqref{eq:growth} we get $h_3\leq 2$ and $h_4=0$.
It is well known and easy to see that if $\bar{R}$ is Koszul, then its Poincar\'e  series equals $1/H(-t)$.
It is routine to check that if $h_3=0$ then $1/H(-t)= 1+4t+\cdots -29x^6+\cdots$, and if $h_3=1$ then $1/H(-t)= 1+4t+\cdots -174 x^8+\cdots$. 
Therefore, in either case ${R}$ is not Koszul.
However, if $h_3=2$ then $e(H)= 12=2^4-2^2$ and we may apply Theorem \ref{thm:bounds} to conclude that $H$ is $G$-quadratic.
\item
If $h_2=6$, then $I_H^*$ is a complete intersection, hence  $H$ is $G$-quadratic.
\end{itemize}
This finishes the proof of the theorem.
\end{proof}

\begin{Corollary}
Let $H$ be a numerical semigroup with $\embdim(H)=5$ and $e(H)$ different from $9$, $10$ and $11$. 
 Then  $H$ is quadratic if and only if it is $G$-quadratic.
\end{Corollary}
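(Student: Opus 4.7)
The plan is to read off the conclusion from the case analysis already carried out in the proof of Theorem~\ref{thm:5semi-koszul}. The implication $G$-quadratic $\Rightarrow$ quadratic is automatic, so I only need the converse. Assume $H$ is quadratic. If $\gr_\mm K[H]$ is not Cohen-Macaulay, Theorem~\ref{thm:5-quad-non-cm} already asserts that $H$ is $G$-quadratic, so suppose $R=\gr_\mm K[H]$ is Cohen-Macaulay. Then $t^{a_1}$ is a regular linear form and the artinian quotient $\bar R=R/(t^{a_1})$ has the same $h$-vector $(1,4,h_2,h_3,h_4)$ as $R$, summing to $e(H)$.

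I would next enumerate the admissible $h$-vectors by combining the Macaulay-type bound \eqref{eq:growth}, the a priori inequality $h_2\le 6$ (forced by the four linearly independent quadratic relations $x_i^2-\bar g_i$, $i=2,\dots,5$, exhibited inside the proof of Theorem~\ref{thm:5semi-koszul}), and the range $5\le e(H)\le 16$ from Theorem~\ref{thm:bounds}(i). A short tally gives $h_2=0\Rightarrow e(H)=5$, $h_2=1\Rightarrow e(H)=6$, $h_2=2\Rightarrow e(H)=7$, $h_2=3\Rightarrow e(H)\in\{8,9\}$, $h_2=4\Rightarrow e(H)\in\{9,10\}$, $h_2=5\Rightarrow e(H)\in\{10,11,12\}$, and $h_2=6\Rightarrow e(H)=16$. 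Removing the forbidden values $e(H)\in\{9,10,11\}$ leaves exactly the $h$-vectors $(1,4)$, $(1,4,1)$, $(1,4,2)$, $(1,4,3,0)$, $(1,4,5,2)$, and $(1,4,6,4,1)$, corresponding respectively to $e(H)\in\{5,6,7,8,12,16\}$.

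For each of these survivors the proof of Theorem~\ref{thm:5semi-koszul} already produces the $G$-quadratic conclusion: $e(H)=5$, $e(H)=12=2^4-2^2$, and $e(H)=16=2^4$ are covered by Theorem~\ref{thm:bounds}; the Hilbert series $1+4t+t^2$ is handled by Lemma~\ref{lemma:tools}(i) together with the lifting Lemma~\ref{lemma:lifting}; and the cases $e(H)=7,8$ invoke Lemma~\ref{lemma:tools}(iii) and (iv) respectively. The main obstacle is therefore not a new computation but the bookkeeping step of confirming that the excluded multiplicities $\{9,10,11\}$ align precisely with the $h$-vectors $(1,4,3,1)$, $(1,4,4,0)$, $(1,4,4,1)$, $(1,4,5,0)$, $(1,4,5,1)$, which are exactly the cases in Theorem~\ref{thm:5semi-koszul} where the $G$-quadratic (or even Koszul) conclusion was not drawn universally. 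The standing hypotheses on $K$ (algebraically closed, $\chara K\ne 2$) propagate from Lemma~\ref{lemma:tools}(iii),(iv).
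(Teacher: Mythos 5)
Your proposal is correct and is essentially the proof the paper intends: the corollary (which the paper states without proof) is read off from Theorem~\ref{thm:5-quad-non-cm} for the non-Cohen-Macaulay case together with the multiplicity-by-multiplicity analysis in the proof of Theorem~\ref{thm:5semi-koszul}, exactly as you do, and your enumeration of the admissible $h$-vectors via the bound \eqref{eq:growth} and their alignment with the excluded multiplicities $\{9,10,11\}$ matches the paper's. The one caveat, which you rightly flag, is that for $e(H)=7,8$ your route passes through Lemma~\ref{lemma:tools}(iii),(iv) and so inherits the hypotheses that $K$ be algebraically closed of characteristic $\neq 2$, which the corollary as printed omits; this imprecision is the paper's own, since its only complete argument covering the exceptional candidate ideals $J_2$ and $J_3$ with $h$-vector $(1,4,3)$ likewise relies on those hypotheses (cf.\ the remark following Remark~\ref{rem:h3-nonflag}).
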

 
We next present the  situations of Koszul ideals without Gr\"obner flags that occurred when discussing the  cases $h_2=2$ and $h_2=3$ in the proof of Theorem \ref{thm:5semi-koszul}.

\begin{Remark}
\label{rem:h2-nonflag}
{\em  
The   ideal $J_1$ has a quadratic Gr\"obner basis with respect to revlex induced by $x_2>x_3>x_4>x_5$ and the $h$-vector of $\bar{R}=K[x_2,x_3, x_4,x_5]/J_1$ is $(1,4,2)$: 
\begin{equation*}
J_1=(x_2^2, x_5^2, p_1=x_3^2-x_2x_4, x_4^2, x_2x_3, x_2 x_5, x_3 x_4, x_4x_5 ).
\end{equation*} 
The following computations show that 
$$
\mathcal{J}_1= \{ 0, (x_2), (x_2,x_3),  (x_2, x_3, x_5), (x_2, x_4, x_5), (x_2, x_3, x_4, x_5)\}
$$
 is a Koszul filtration for $\bar{R}$:
\begin{eqnarray*}
\quad 0: (x_2)= (x_2, x_3, x_5), \quad (x_2):(x_2,x_5)= (x_2, x_4, x_5), \\
(x_2, x_5): (x_2, x_3, x_5)=(x_2, x_5): (x_2, x_4, x_5)=(x_2, x_3, x_4, x_5),  \\
(x_2, x_3, x_5): (x_2, x_3, x_4, x_5)= (x_2, x_4, x_5): (x_2, x_3, x_4, x_5)=(x_2, x_3, x_4, x_5). 
\end{eqnarray*}
All the computations in these equations are made in $\bar{R}$. We hope there is no risk of confusion.
It is also easy to check that there is no Gr\"obner flag for $\bar{R}$ with basis (the residue classes of) $x_2, x_3, x_4, x_5$.
}
\end{Remark}

\begin{Remark}
\label{rem:h3-nonflag}
{\em
The quotient of $\bar{S}=K[x_2,\dots, x_5]$ modulo either one of the following two ideals  has $h$-vector $(1,4,3)$:
\begin{eqnarray*}
J_2=(x_2^2, x_5^2, p_1=x_3^2-x_2x_4,  p_3= x_4^2-x_2x_5, x_2x_3,  x_3 x_4, x_3 x_5 ), \\
J_3= (x_2^2, x_5^2, p_1=x_3^2-x_2x_4, x_4^2, x_2x_3, x_3x_4, x_3x_5).
\end{eqnarray*}

We claim that none of them has a quadratic Gr\"obner basis with respect to any term order $<$.

Indeed, regarding $J_2$: if $\ini_<(p_1)= x_2x_4$ then the $S$-polynomial $S(p_1, x_2x_3)=x_3^3$. 
Else, in case $\ini_<(p_3)=x_4^2$ we obtain $S(p_1, x_3x_5)= x_2x_4x_5$ 
and in case $\ini_<(p_3)=x_2x_5$ we compute $S(p_1, x_3x_4)=x_2x_4^2$. 
 
Regarding $J_3$: if $\ini_<(p_1)=x_3^2$, then $S(p_1, x_3x_5)=x_2x_4x_5$. Similarly,  if $\ini_<(p_1)=x_2x_4$ then $S(p_1, x_2x_3)= x_3^3$.
It is easy to observe that in any of these cases the computed $S$-polynomial does not reduce to zero 
using the remaining quadrics that generate $J_2$, respectively $J_3$.  Therefore $J_2$ and $J_3$ do not have a quadratic Gr\"obner basis with respect to any term order.

The following computations performed in $\bar{R}=\bar{S}/J_2$, respectively in $\bar{R}=\bar{S}/J_3$,  show that 
\begin{equation*}
\mathcal{J}= \{ 0, (x_5), (x_3,x_5), (x_2, x_5),  (x_2, x_4, x_5), (x_2, x_3, x_5), (x_2, x_3, x_4, x_5)\}
\end{equation*}
 is a Koszul filtration for $\bar{R}$:
\begin{eqnarray*}
\quad 0: (x_5)= (x_3, x_5), (x_5):(x_3,x_5)= (x_2, x_4, x_5), (x_5): (x_2, x_5)=(x_2, x_3, x_5),  \\
  (x_3, x_5):(x_2, x_3, x_5)= (x_2,x_5): (x_2, x_4, x_5)= (x_2, x_3, x_4, x_5), \\ 
	(x_2, x_3, x_5): (x_2, x_3, x_4, x_5)= (x_2, x_4, x_5): (x_2, x_3, x_4, x_5)=(x_2, x_3, x_4, x_5). 
\end{eqnarray*}
}
\end{Remark}
 
\begin{Remark}
{\em
In practice, we were not able to find quadratic numerical semigroups producing the ideals $J_1, J_2, J_3$ in Remarks \ref{rem:h2-nonflag} and \ref{rem:h3-nonflag}. 
If such semigroups do not exist, we could drop the restrictions on the field $K$ in Theorem \ref{thm:5semi-koszul}.
}
\end{Remark}

Based on the proof of Theorem \ref{thm:5semi-koszul} and on the numerical experiments detailed before the proof, 
in Table \ref{table:multiplicities} 
we summarize  with examples our knowledge of the possible $h$-vectors of $\gr_\mm K[H]$, 
grouped by the multiplicity, when $H$ is a $5$-generated quadratic numerical semigroup.
For one of these $h$-vectors we could not find examples of semigroups, 
hence we ask if this list should be further reduced. The abbreviation quad GB indicates that $I_H^*$ has a quadratic Gr\"obner basis.
 
\begin{table}[ht]
\caption{$h$-vectors for quadratic $5$-semigroups}
\centering
\begin{tabular}{r l r r r      } 
\hline \hline 
$e(H)$ & $h$-vector & Remarks on $I_H^*$    &  Example    \\[0.5ex]
\hline 
5 &  $(1,4)$  & quad GB   & $\langle 5,6,7,8,9 \rangle $ \\
\hline
6 &  $(1,4,1)$   & quad GB   & $\langle 6,7,8,9,10 \rangle $  \\
\hline 
7 &  $(1,4,2)$ & quad GB & $\langle 7,8,9,10,11 \rangle $    \\
\hline
8 &  $(1,4,2,1)$  & not CM, quad GB & $\langle  8,12,13,18,35 \rangle $    \\
\phantom{8} &  $(1,4,3)$  & quad GB    & $\langle  8,9,10,11,12 \rangle $      \\
\hline
9 &  $(1,4,3,1)$  & quad GB   &  $\langle  9, 10,11,13,17 \rangle $    \\
\phantom{9} &  $(1,4,4)$  & quad GB  & $\langle  9, 10, 11,12, 15 \rangle $       \\
\phantom{9} &  \phantom{$(1,4,4)$}    & not Koszul  & $\langle  9, 17, 20, 23, 25 \rangle $   \\
\hline
10 &  $(1,4,4,1)$  & quad GB & $\langle 10, 16, 19,22,25 \rangle$      \\
\phantom{10} &  $(1,4,5)$  & not Koszul  & ?      \\
\hline
11 & $(1,4,5,1)$  & not Koszul    & $\langle 11,13,14,15,19 \rangle$  \\
\hline 
12 & $(1,4,5,2)$ &  quad GB  & $\langle 12,14, 16,18,27\rangle$     \\
\hline 
16 & $(1,4,6,4,1)$ & quad GB   & $\langle 16, 17,18,20, 24\rangle$  \\
[1ex]
\hline
\end{tabular}
\label{table:multiplicities}
\end{table} 
 
\begin{Remark}{\em
In recent work (\cite {dAdMM}) D'Anna et al. study the numerical semigroups $H$ for which the   Hilbert function of $\gr_\mm K[H]$ is non-decreasing.
We observe that this is also the case when  $H$ is quadratic and $\embdim(H) \leq 5$.

Indeed, by Proposition \ref{prop:cm-small-n} and the Table \ref{table:multiplicities} we have that the $h$-vector of $\gr_\mm K[H]$ has nonnegative entries, 
hence  the Hilbert function of $\gr_\mm K[H]$ is non-decreasing.
}
\end{Remark}

\medskip

\section{A long proof}
\label{sec:long}

In this section we prove the direct implication of Theorem \ref{thm:5-quad-non-cm}.

Let $H$ be a quadratic numerical semigroup minimally generated by $a_1<\dots <a_5$. Assume 
that $\gr_\mm K[H]$ is not Cohen-Macaulay.

By Theorem \ref{thm:bounds} we see that 
\begin{equation}
4<a_1< 16.
\end{equation}
By Proposition \ref{prop:cm-gr} and Lemma \ref{lemma:quad-semi} the lack of the Cohen-Macaulay property is equivalent to say that there exist $\nu_i \in \{0,1\}$, $i=2,\dots, 5$, 
such that $\sum_{i=2}^5 \nu_i a_i \in a_1+H$ and whenever 
\begin{equation}
\label{eq:sum-cm}
\sum_{i=2}^5 \nu_i a_i= \sum_{i=1}^5 \mu_i a_i, \text{ with integers }\mu_1>0, \mu_2, \dots, \mu_5 \geq 0,
\end{equation}
one has $\sum_{i=2}^5 \nu_i > \sum_{i=1}^5 \mu_i$.

Without loss of generality we may assume that in any equation like \eqref{eq:sum-cm} one has $\nu_i \mu_i=0$ for all $i>1$.
Since $\embdim(H)=5$, at least two of the $\nu_i$'s are positive.
If exactly two of the $\nu_i$'s are equal to $1$, then $\sum_{i=1}^5 \mu_i=1$, $\mu_1=1$, and $a_1 \in \langle a_2, \dots, a_5\rangle$, which is false.
If all $\nu_i$ are positive, then $a_2+\dots +a_5=\mu_1 a_1$ and since $a_1=e(H)$ we get $\mu_1 >4=\sum_{i=2}^5 \nu_i$, 
which contradicts the failure of the Cohen-Macaulay property.

Hence we have to consider only expressions where exactly one $\nu_i$ is zero.
If $\nu_2=0$, then \eqref{eq:sum-cm} is of the form $a_3+a_4+ a_5= a_1+a_2$ or $a_3+a_4+ a_5= 2 a_1$. 
If $\nu_3=0$, then   $a_2+a_4+ a_5= a_1+a_3$ or $a_2+a_4+ a_5= 2 a_1$.
If  $\nu_4=0$, then  $a_2+a_3+ a_5= a_1+a_4$ or $a_2+a_3+ a_5= 2 a_1$.
If $\nu_5=0$, then  $a_2+a_3+ a_4= a_1+a_5$ or $a_2+a_3+ a_4= 2 a_1$.
Since   $a_1<a_2<\dots <a_5$,  the only possibility for \eqref{eq:sum-cm} is
\begin{equation}
\label{eq:sum}
a_2+a_3+a_4=  a_1+a_5.
\end{equation}

By Lemma \ref{lemma:quad-semi}(ii) 
\begin{align}
2a_2  &= u a_1+ v a_3 +  w a_4 +\lambda a_5, \label{eq:a2-normalized} \\ 
2a_3  &= u' a_1+ v' a_2 +  w' a_4 +\lambda' a_5, \label{eq:a3-normalized} \\
2a_4  &= u'' a_1+ v'' a_2 +  w'' a_3 +\lambda'' a_5, \label{eq:a4-normalized}
\end{align}
 for  $u, v, w, \lambda, u', \dots, \lambda''$  nonnegative integers. Moreover, since all $a_i>0$  we may assume, without loss of generality, that
$v,w, \lambda, v',w', \lambda', v'', w'', \lambda'' \in \{0,1\}$. We later refer to these equations as {\em normalized expressions} for $2a_2$, $2a_3$ and $2a_4$, respectively.

We observe that due to the  ordering of the $a_i$'s and to \eqref{eq:sum} we have $a_5>a_3+a_4>2 a_3>2a_2$, hence $\lambda=\lambda'=0$.
Also, \eqref{eq:sum} implies $u''\lambda''=0$, otherwise $a_4 \in \langle a_1, a_2, a_3\rangle$, which is false. Similarly,  $v+w <2$.

The rest of the proof treats the remaining two possibilities: $2 a_2= u a_1 + a_3$, or $2 a_2= u a_1 + a_4$, where we must have $u \geq 1$.
The rather long discussion depends on the coefficients that occur in the normalized expressions \eqref{eq:a3-normalized} and \eqref{eq:a4-normalized}.
We identify six situations  when the tangent cone $\gr_\mm K[H]$ is not Cohen-Macaulay, but, after  reordering, all of them fit into the two families 
$(i)$ and $(ii)$ in  the text of the theorem.
 
\medskip
   
\subsection{\underline{ Case (A)}} Assume 
\begin{equation}
\label{eq:case-a}
2 a_2= u a_1 + a_3 \text{ with }u \geq 1.
\end{equation}

\subsubsection{Case \underline{$w' =0$}.}
			Then  
			\begin{equation}
			2 a_3= u' a_1 + v' a_2 \text{ with } v' \in \{0, 1\}.
			\end{equation}
			 
			If $v'=1$, then $2 a_3= u' a_1+ a_2$, with $u' \geq 1$. Adding to this the equation \eqref{eq:case-a}, after obvious simplifications we obtain 
			$a_2+a_3= (u+u')a_1$, hence 
			$$
			 a_5=(a_2+a_3+a_4)-a_1= (u+u'-1) a_1+a_4 \in \langle a_1, a_4\rangle,
			$$ which is false.
			
			Thus $v'=0$ and $2 a_3= u' a_1$ where $u'>1$ need to be odd and $a_1$ even.
				Together with \eqref{eq:case-a} this gives $2(2a_2-u a_1)= u' a_1$, i.e. 
							$$
							4 a_2= (u'+2u) \cdot a_1.
							$$ 
							Since $u'$ is odd we get $4| a_1$, hence $a_1 \in \{ 8, 12 \}$.
						\begin{enumerate}[leftmargin=0.5cm,label*=\arabic*.]
						\item If \underline{$a_1=8$}, then 
										$$\begin{matrix}   a_2 &=& 2(u'+2u),\\ a_3 &=& 2 a_2- u a_1= 4u' . \end{matrix}$$
										Consider the normalized expression \eqref{eq:a4-normalized}: 
											$$
											2a_4= u'' a_1 + v'' a_2 + w'' a_3 +\lambda'' a_5.
											$$ 
											
										  If $\lambda''=1$, since $a_1, a_2, a_3$ are even, then $a_5$ is even. Using \eqref{eq:sum} we infer that $a_4$ is even, which is false. 
											
										Therefore  $\lambda''=0$ and 
											$$2a_4= u'' a_1 + v'' a_2 + w'' a_3.
											$$ 
											Since $a_2, a_3<a_4$ we should have $u''>0$.
											Because $4|a_1$ and $4|a_3$, we can not have $v''=0$, otherwise $a_4$ is even, and by \eqref{eq:sum} also $a_5$ is even, which gives a contradiction. 
											Hence $v''=1$. We distinguish two situations.
											 \begin{enumerate}[leftmargin=0.5cm, label*=\arabic*.]
												\item If \underline{$w''=1$}, then $2 a_4= u'' a_1 + a_2+ a_3$. This gives 
													\begin{eqnarray}
													\label{eq:sol1}
														a_1 &=& 8, \nonumber  \\ 
														a_3 &=& 4u',  \nonumber \\
														a_2 &=& 4u+2u',  \\
														a_4 &=& 4u''+ 2u+ 3u', \nonumber\\
														a_5 &=& 9u'+6u+4u''-8, \nonumber
													\end{eqnarray}
													which is of the desired form.
												\item If \underline{$w''=0$}, then $2 a_4= u'' a_1 + a_2$. This gives
													\begin{eqnarray}
													\label{eq:sol2}
													a_1 &=& 8, \nonumber  \\ 
													a_3 &=& 4u',  \nonumber \\
													a_2 &=& 4u+2u',  \\
													a_4 &=& 4u''+ 2u+  u', \nonumber\\
													a_5 &=& 7u'+6u+4u''-8, \nonumber
												\end{eqnarray}
											   which is of the desired form.
											 \end{enumerate}
						\item If \underline{$a_1=12$}, then
							\begin{eqnarray*}
								a_2 &=& 3u'+ 6u,\\ 
								a_3 &=& 6u'.
							\end{eqnarray*}
							In the normalized expression 
											$$
											\quad 2a_4= u'' a_1 + v'' a_2 + w'' a_3 +\lambda'' a_5 
											$$ 
							we add $a_4+(1-v'') a_2+ (1-w'')a_3$ to both sides and using \eqref{eq:sum} we get
							$$
							3 a_4 +(1-v'')a_2+(1-w'')a_3= (1+u'')a_1+ (1+\lambda'')a_5,
							$$
							hence $3|a_5$, and by \eqref{eq:sum} also $3|a_4$, hence $\gcd(a_1, \dots, a_5) >1$, a contradiction.
						\end{enumerate}
\subsubsection{Case \underline{$w'=1$}.}
\label{ssec-test}
Then 
\begin{equation*}
2a_3= u' a_1+ v' a_2+a_4 \text{ with } v'\in \{0,1\}.
\end{equation*}
If $v'=1$, then $2a_1+ 2a_5= 2a_2+ 2a_3+ 2 a_4= (u a_1+a_3)+ (u' a_1+ a_2+a_4)+ 2a_4= (u'+u) a_1+ (a_2+a_3+a_4)+ 2a_4$. 
This gives $a_5=(u'+u-1)a_1+2 a_4$, which is false.

\noindent Therefore $v'=0$ and
\begin{equation}
\label{eq:a3}
2a_3= u' a_1+  a_4 \text{ with } u' >0.
\end{equation} 
Consider the normalized expression \eqref{eq:a4-normalized}
\begin{equation*}
									2a_4= u'' a_1 + v'' a_2 + w'' a_3 +\lambda'' a_5.
\end{equation*} 
If $\lambda'' =1$,  since $u''\lambda''=0$, we get $u''=0$. Equation  \eqref{eq:a4-normalized} becomes
\begin{equation*}
2 a_4= v'' a_2+ w'' a_3+ a_5.
\end{equation*}
To this we add \eqref{eq:a3} and  $2a_2= u a_1+a_3$ from \eqref{eq:case-a} and after using \eqref{eq:sum} we get that
$$
 a_5= (u'+u-2) a_1 + v'' a_2+ (w''+1) a_3+ a_4 \in \langle a_1, a_2, a_3, a_4\rangle,
$$
which is false.

\noindent Thus $\lambda''=0$ and 
$$
2 a_4= u'' a_1+ v'' a_2+ w'' a_3.
$$
To this we   add \eqref{eq:a3} and $2a_2= u a_1+a_3$ and, after using \eqref{eq:sum}, we see that
$$
2 a_5=(u''+u'+u-2) a_1+  v'' a_2+ (w''+1)a_3+ a_4.
$$
If $v''>0$, by \eqref{eq:sum} we get $a_5\in \langle a_1, a_2, a_3, a_4 \rangle$, which is false.

\noindent Hence $v''=0$ and \eqref{eq:a4-normalized} becomes
$$
2a_4= u'' a_1+ w'' a_3= u'' a_1+ w''(2a_2-u a_1)= (u''-w''u)a_1+ 2w'' a_2 \text{ with } w''\in\{0, 1\}.
$$
From \eqref{eq:a3} we extract
$$
a_4= 2a_3-u' a_1= 2(2a_2-u a_1)- u' a_1=4a_2-(2u+u')a_1 
$$ 
which we replace in the previous equation for $2a_4$. Routine manipulation gives
\begin{equation}
\label{eq:combo}
(8-2w'')a_2=((4-w'')u+2u'+u'') a_1.
\end{equation}
\begin{enumerate}[leftmargin=0.5cm,label*=\arabic*.]
\item If \underline{$w''=0$}, the equations \eqref{eq:combo}, \eqref{eq:case-a} and \eqref{eq:a4-normalized} together with \eqref{eq:sum} yield
	\begin{eqnarray*}
	a_2 &=& \frac{4u+2u'+u''}{8} \cdot a_1,  \\
	a_3 &=& \frac{2u'+u''}{4} \cdot a_1,   \\
	a_4 &=& \frac{u''}{2} \cdot a_1,\\
	a_5 &=& \frac{4u+6u'+7u''-8}{8} \cdot a_1.  
	\end{eqnarray*}
	From here we infer that $u''$ is odd (otherwise $a_1$ divides $a_4$, which is false) and consequently $8|a_1$. 
	Hence $a_1=8$ and the generators of the semigroup are  
	\begin{eqnarray}
	\label{eq:sol3}
	a_1 &=& 8, \nonumber \\
	a_4 &=& 4u'',  \nonumber \\
	a_3 &=& 2u''+4u',   \\
	a_2 &=& u''+ 2u'+ 4u,  \nonumber \\
	a_5 &=& 4u+6u'+ 7u''-8,  \nonumber
	\end{eqnarray}
	as desired.
\item If \underline{$w''=1$}, equation \eqref{eq:combo} together with \eqref{eq:case-a},  \eqref{eq:a4-normalized} and \eqref{eq:sum} give after routine computations 
\begin{eqnarray*}
	a_2 &=& \frac{3u+2u'+u''}{6} \cdot a_1,  \\
	a_3 &=& \frac{2u'+u''}{3} \cdot a_1,   \\
	a_4 &=& \frac{u'+2u''}{3} \cdot a_1,\\
	a_5 &=& \frac{3u+8u'+7u''-6}{6} \cdot a_1.  
	\end{eqnarray*}
We note that $a_5= a_2+ (u'+u''-1) a_1 \in \langle a_1, a_2 \rangle$, which is false.	
\end{enumerate}

\medskip

\subsection{\underline{Case (B)}}
 Assume 
\begin{equation}
\label{eq:case-b}
2 a_2= u a_1 + a_4, \text{  with }u \geq 1.
\end{equation}
 
If in the normalized expression 
$$
2 a_4= u'' a_1+ v'' a_2+ w'' a_3+\lambda'' a_5 \text{\quad with } v'', w'', \lambda'' \in\{0,1\}
$$ 
we had $\lambda''=1$, then  $u''=0$.
Combined with \eqref{eq:case-b}, we get
$$
2a_2+a_4= ua_1+ 2 a_4= u a_1 +v'' a_2 +w''a_3+a_5.
$$
Using \eqref{eq:sum} and  the latter equation we obtain
$$
(1-v'')a_2= (u-1)a_1+ (w''+1) a_3 >0,
$$
which is a contradiction since $v'' \leq 1$. Therefore $\lambda''=0$.

\subsubsection{Case \underline{$w'=v'=0$}.} 
  \label{ss:test} 
	Then $u'>0$ and \eqref{eq:a3-normalized} becomes
	\begin{equation}
		2 a_3= u' a_1.
	\end{equation}
 From this and \eqref{eq:case-b} we get $a_3=u'a_1/2$, $a_4=2a_2-u a_1$ and that $a_1$ is even and $u'$ is odd.
 
We substitute in the normalized expression above  the values for $a_3$ and $a_4$ in terms of $a_1$ and $a_2$ and we get
$$
(4-v'') \cdot a_2= \left(2u+u''+ w''\frac{u'}{2}\right) \cdot a_1.
$$
	\begin{enumerate}[leftmargin=0.5cm,label*=\arabic*.]
		\item If \underline{$v''=1$}, the previous equation becomes
		$$
		3a_2= \left(2u+u''+w''\frac{u'}{2}\right) \cdot a_1. 
		$$
		We consider the possible even values of $a_1$. 
		\begin{enumerate}[leftmargin=0.5cm, label*=\arabic*.]
		 \item If \underline{$a_1=6$} we obtain the other generators
			\begin{eqnarray*}
				a_2 &=& 4u+2u''+w''u',\\
				a_3 &=& 3u', \\
				a_4 &=&2u+4u''+2w''u', \\
				a_5 &=& 6u+6u''+ (3w''+3)u'-6.
			\end{eqnarray*}
			Note that $a_5 = (u+u''-1) a_1+ (w''+1) a_3\in \langle a_1, a_3 \rangle$, which is false.
		 \item If \underline{$a_1=8$ or $12$}, then $a_2,a_3, a_4$ are even, hence $a_5$ is even as well, a contradiction.
		 \item If \underline{$a_1=10$}, then it easy to see that $a_2,a_3, a_4, a_5$ are divisible by $5$, which is false.
		 \item If \underline{$a_1=14$}, then all the generators are divisible by $7$, which is false.
		\end{enumerate}
		\item If \underline{$v''=0$}, then
			$$
				4a_2= \left(2u+u''+w''\frac{u'}{2}\right)\cdot a_1,
			$$
			which forces $a_1$ to be even.
			\begin{enumerate}[leftmargin=0.5cm, label*=\arabic*.]
				\item If \underline{$w''=0$} we get $2 a_4= u'' a_1$ and $4 a_2= (2u+u'') a_1$. Therefore $u''$ is odd, $a_1$ is divisible by $4$, hence $a_1\in \{4,8\}$,
				and the other generators are
					\begin{eqnarray*}
						a_2 &=& \frac{2u+u''}{4}\cdot a_1,\\
						a_3 &=& \frac{u'}{2}\cdot a_1, \\
						a_5 &=& \frac{3u''+2u+2u'-4}{4}\cdot a_1.
					\end{eqnarray*}
					It is immediate to note that if $a_1=8$ all generators are even, while if $a_1=12$ all of them are divisible by $3$. 
					None of these situations may hold. 
				\item If \underline{$w''=1$} we obtain 
					$$
						8 a_2= (4u+2u''+u') \cdot a_1.
					$$
					Since $u'$ is odd we get $a_1=8$. From the other equations we compute the   other generators 
					 \begin{eqnarray}
						\label{eq:sol4}
							a_3 &=& 4u', \nonumber\\
							a_4 &=& 4u''+2u', \nonumber \\
							a_2 &=& 4u+ 2u''+u', \\
							a_5 &=& 4u+6u''+7u'-8,  \nonumber
					 \end{eqnarray}
					which turn out to be of the desired form.
			\end{enumerate}
	\end{enumerate}
\subsubsection{Case \underline{$w'= 0$ and $v'=1$}.} 
Then $2 a_3= u'a_1+a_2$ with $u'>0$. 

Using \eqref{eq:case-b} and the normalized equation \eqref{eq:a4-normalized} we get 
$$
  2a_1+ 2a_5= 2 a_2+ 2a_3+ 2a_4=(u'+u''+u)a_1+ (v''+1) a_2+ w'' a_3+ a_4,
$$ 
hence $w''=0$. 
		\begin{enumerate}[leftmargin=0.5cm,label*=\arabic*.]
			\item If \underline{$v''=1$}, by adding the equations $2a_2=u a_1+ a_4$ and $2a_4= u'' a_1+ a_2$ we get $a_2+a_4=(u+u'')a_1$.
				Therefore $a_5=(a_2+a_4)+ a_3-a_1= (u+u''-1) a_1+a_3$, which is false.
			\item If \underline{$v''=0$}, then $2a_4=u'' a_1$ with $u''$ odd and $a_1$ even. 
			Since $2a_2=u a_1+ a_4$ we get   $4 a_2= (2u+ u'')\cdot a_1$. This   implies that $4|a_1$, hence $a_1\in \{8,12\}$. 
			
			If \underline{$a_1=12$}, then $a_2=3(2u+u'')$. Since  $2 a_3=u' a_1+ a_2$ and $a_4= ua_1-2a_2$ we derive that $a_3$ and $a_4$ are divisible by $3$, 
			   hence also $3|a_5$, which is false.
				
			If \underline{$a_1=8$}, the rest of the generators are 
				\begin{eqnarray}
					\label{eq:sol5}
					a_4 &=& 4u'', \nonumber \\
					a_2 &=& 4u+2u'', \nonumber \\
					a_3 &=& 2u+u''+4u', \\
					a_5 &=& 6u+7u''+4u'-8, \nonumber
				\end{eqnarray}
			which are of the desired format.	
		\end{enumerate}
\subsubsection{Case \underline{$w'=1$}.} 
  Then
   \begin{equation}
			\label{eq:new-new-a3}
			2a_3=u' a_1+ v' a_2+a_4.
	 \end{equation}
  Since  in the normalized expression \eqref{eq:a4-normalized} we have $\lambda''=0$ and $v'',w'' \leq 1$, then $u'' >0$.
	 	\noindent By \eqref{eq:case-b} and \eqref{eq:new-new-a3} we may write
			\begin{eqnarray*}
				2 a_5 &=& 2a_2+ 2a_3+2a_4-2a_1 \\
				      &=& (u a_1+a_4) + (u'a_1+v' a_2 +a_4) + (u''a_1+v''a_2+w'' a_3)-2a_1 \\
							&=& (u+u'+u''-2) a_1+ (v'+v'')a_2 +w'' a_3 +2a_4.
			\end{eqnarray*}
		 \begin{enumerate}[leftmargin=0.5cm,label*=\arabic*.]
			\item If \underline{$w''=1$ and $v'+v'' >0$}, we get $a_5 \in \langle a_1, a_2, a_3, a_4 \rangle$, which is false.
			\item If \underline{$w''=1$ and $v'=v''=0$}, summing the equations
			    \begin{eqnarray*}
							2a_4 &=& u'' a_1+ a_3 \\
							2 a_3 &=& u' a_1 +a_4
					\end{eqnarray*}
					we obtain that $a_3+a_4=(u'+u'')a_1$, which, together with \eqref{eq:sum} yields  $a_5=(u'+u''-1)a_1+a_2 \in \langle a_1, a_2 \rangle$, a contradiction.
			\item If \underline{$w''=0$}, then $2a_4= u'' a_1+ v'' a_2$, and after substituting in here $a_4= 2a_2-ua_1$ (from \eqref{eq:case-b}) we get
			$$
			(4-v'')a_2= (2u+u'')a_1.
			$$
			\begin{enumerate}[leftmargin=0.5cm, label*=\arabic*.]
			\item			If \underline{$v''=0$}, then $a_4= u'' a_1/2$ and   $u''$ is odd.
			 Other generators are obtained immediately:
			   \begin{eqnarray*}
					 a_2 &=& \frac{2u+u''}{4} \cdot a_1, \\
					 a_3 &=& \frac{4u'+2u''+v'(2u+u'')}{8} \cdot a_1. \\
				 \end{eqnarray*}
			Since $u''$ is odd  we deduce that $4|a_1$, hence $a_1 \in \{8,12 \}$.
			
			If \underline{$a_1=12$}, because the denominators of $a_2,a_3$ and $a_4$ are powers of $2$ we see that $a_2, a_3, a_4$ are divisible by $3$, and the same holds for $a_5$, a  contradiction. \\
			\noindent Assume \underline{$a_1=8$}.  If $v'=0$ then we note that $a_2, a_3, a_4$ are even, hence $a_5$ is even, too. This is false. Hence $v'=1$, and the generators of the semigroup are
			\begin{eqnarray}
				\label{eq:sol6}
				a_1 &=& 8, \nonumber\\
				a_4 &=& 4u'', \nonumber \\
				a_2 &=& 4u+2u'', \\
				a_3 &=& 4u'+ 2u+ 3u'', \nonumber \\
				a_5 &=& 4u'+6u+ 9u''-8, \nonumber
			\end{eqnarray}
			which is on our list.
			\item If \underline{$v''=1$}, then $3a_2=(2u+u'') a_1$ which implies  $3 \nmid 2u+u''$ and  $3|a_1$.
			We get 
			\begin{eqnarray*}
				a_2 &=& \frac{2u+u''}{3} \cdot a_1, \\
				a_4 &=& 2a_2-u= \frac{2u''+u}{3} \cdot a_1.
			\end{eqnarray*}
			By \eqref{eq:sum} we see that  $a_5=a_2+a_3+a_4-a_1= (u+u''-1) a_1+ a_3$, which is false.
			\end{enumerate}
		 \end{enumerate}
The proof of Theorem \ref{thm:5-quad-non-cm} is now complete. $\square$

\medskip

{\bf Acknowledgement}.
We gratefully acknowledge the use of  the computer algebra system Singular \cite{Sing} for our numerical experiments. 
 We thank Aldo Conca for pointing our attention to \cite{Conca-quadrics-codim3}. We thank Mihai Cipu for verifying the proof of Theorem \ref{thm:5-quad-non-cm} 
and the suggested improvements. 
We   also thank  Francesco Strazzanti who communicated us the  quadratic semigroup with $h$-vector $(1,4,3,1)$ from Table \ref{table:multiplicities}.

The   author was  supported by  a grant of the Romanian Ministry of Education, CNCS--UEFISCDI, project number PN-II-RU-PD-2012-3--0656.

\medskip
{}

\end{document}